\newlength{\myheight}
\newlength{\myheighta}
\DeclareMathOperator{\Mono}{Mono}
\DeclareMathOperator{\Id}{{\it Id}}
\newcommand{\Set}[1]{\left\{#1\right\}}
\newcommand{\setDef}[2]{{#1}\left|\,\vphantom{#1}{#2}\right.}%DONT USE!!!
\newcommand{\SetDef}[2]{\Set{\setDef{#1}{#2}}}
\DeclareMathOperator{\C}{\mathbb{C}}
\DeclareMathOperator{\N}{\mathbb{N}}
\DeclareMathOperator{\V}{\mathbb{V}}
\DeclareMathOperator{\U}{\mathbb{U}}
\DeclareMathOperator{\Q}{\mathbb{Q}}
\DeclareMathOperator{\hht}{hm}
\DeclareMathOperator{\hc}{hc}
 \newtheorem{thm}{Theorem}[section]
 \newtheorem{lem}[thm]{Lemma}
 \theoremstyle{definition}
 \newtheorem{defn}[thm]{Definition}
 \theoremstyle{remark}
 \newtheorem{exmp}{Example}
 \numberwithin{equation}{section}
\begin{document}

\title[Methods for computing $b$-functions]
 {Methods for computing $b$-functions  associated \\ with $\mu$-constant deformations  \\ -- Case of inner modality 2 --}

\thanks{This work has been partly supported by JSPS  Grant-in-Aid for Scientific Research (C) (Nos 18K03214,18K03320).}
%% use optional labels to link authors explicitly to addresses:
%% \author[label1,label2]{<author name>}
%% \address[label1]{<address>}
%% \address[label2]{<address>}

\author[Katsusuke Nabeshima]{Katsusuke Nabeshima}

\address{%
Graduate School of Technology, Industrial and Social Sciences, Tokushima University, \\2-1, Minamijosanjima, Tokushima, JAPAN}

\email{nabeshima@tokushima-u.ac.jp}

\author{Shinichi Tajima}
\address{Graduate School of Science and Technology, Niigata University, \\8050, Ikarashi 2-no-cho, Nishi-ku Niigata, JAPAN}
\email{tajima@emeritus.niigata-u.ac.jp}

%----------classification, keywords, date
\subjclass{Primary 13P10; Secondary 14H20}

\keywords{$b$-function, \   $\mu$-constant deformations, \  comprehensive Gr\"obner system, \  local cohomology, \   ${\mathcal D}$-modules}

\begin{abstract}
New methods for computing parametric local $b$-functions are introduced for $\mu$-constant deformations of semi-weighted homogeneous singularities. The keys of the methods are comprehensive Gr\"obner systems in Poincar\'e-Birkhoff-Witt algebra and holonomic ${\mathcal D}$-modules. It is shown that the use of semi-weighted homogeneity reduces the computational complexity of $b$-functions associated with $\mu$-constant deformations. In the case of inner modality 2, local $b$-functions associated with $\mu$-constant deformations are obtained by the resulting method and given the list of parametric local $b$-functions.

\end{abstract}

%%
%% Start line numbering here if you want
%%
% \linenumbers

%% main text
%%% ----------------------------------------------------------------------
\maketitle
%%% ----------------------------------------------------------------------

%%%%%%%%%%%%%%%%%%%%%%%%%%%%%%%%%%%%%%%%%%%%%%%%%%
\section{Introduction}
We introduce a new algorithm for computing local $b$-functions of semi-weighted homogeneous polynomials, and study local $b$-functions of $\mu$-constant deformation of inner modality 2 singularities.

A $b$-function is an analytic invariant of a hypersurface singularity.  In 1997, T. Oaku gave an algorithm for computing $b$-functions \cite{Oaku1}. After that many researcher have improved the algorithm in the context of symbolic computation. 
However, as the computational complexity of the existing algorithms is still quite high, it is difficult to obtain $b$-functions in realistic time in many cases.  The bottleneck lies in computing non-commutative Gr\"obner bases. 

In a pioneering paper \cite{yano78} published in 1978, T. Yano investigated  $b$-functions and already considered $b$-functions associated with a $ \mu $-constant deformations of singularities. He noticed that some roots of $b$-functions are not stable and change these values discontinuously under  $ \mu $-constant deformations. Later, in 1980's, M. Kato and P. Cassou Nogu\'es explicitly computed $b$-functions associated with  a $ \mu$-constant deformation for some cases in their seminal papers \cite{CN1,CN2,Ka81,Ka82}.

In this paper, we consider $b$-functions of semi-weighted homogeneous singularities in the context of computational algebraic analysis. Upon using properties of semi-weighted homogeneous singularities, we construct an effective method for computing $b$-functions. Notably, the resulting algorithm explicitly compute parameter dependency of $b$-functions associated with $ \mu$-constant deformations. The keys of our approach are comprehensive Gr\"obner systems in Poincar\'e-Birkhoff-Witt algebra and holonomic ${\mathcal D}$-modules. We show that the use of semi-weighted homogeneity allows us to design an effective algorithm for computing $b$-functions associated with $ \mu$-constant deformations, which avoid the use of Gr\"obner bases computation in an elimination step. As an application, we study inner modality 2 singularities.

This paper is organized as follows. In section~2, we recall the notion of semi-weighted homogeneity and give a list of semi-weighted homogeneous polynomials whose inner modality is equal to two, the target singularities of the present paper. In section 3, we briefly recall a comprehensive Gr\"obner system, a method of  computing parametric $b$-functions and review our previous results reported in \cite{NT17a}. In section 4, we present an algorithm for computing algebraic local cohomology solutions of a holonomic ${\mathcal D}$-module. 
In section~5, we introduce a new algorithm for computing $b$-functions of semi-weighted homogeneous polynomials. In section~6, we give a list of $b$-functions associated with a $\mu$-constant deformation of inner modality 2 singularities.\\

Throughout this paper, we use the notation $x$ as the abbreviation of $n$ variables $x_1,\ldots, x_n$, $\Q$ as the field of rational numbers and $\C$ as the field of complex numbers. The set of natural numbers $\N$ includes zero. 
For elements $ p_1,p_2,\ldots,p_r $ in a ring $R$, let $ \Id(p_1,p_2,\ldots, p_r)$ denote the ideal in $R$ generated by $p_1,p_2,\ldots, p_r$.

%%%%%%%%%%%%%%%%%%%%%%%%%%%%%%%%%%%%%%%%%%%%%%%%%%
\section{The list of inner modality 2 singularities}

Let ${\bf w}=(w_1,w_2,\ldots,w_n) \in \N^n$,  $\alpha=(a_1,a_2,\ldots,a_n) \in \N^n$ and $x^\alpha=x_1^{a_1}x_2^{a_2}\cdots x_n^{a_n} \in \C[x]$ .  Let $|x^\alpha|_{{\bf w}}$ denote the weighted degree $\displaystyle \sum_{i=1}^nw_ia_i$ of the monomial $ x^{\alpha}$. 

\begin{defn}
\begin{enumerate}
\item[(1)] A non-zero polynomial $f \in \C[x]$ is called  weighted homogeneous of type $(d; {\bf w})$ if all monomials of $f$ have the same weighted degree $d$ w.r.t. ${\bf w}$ where $d \in \N$.
\item[(2)] The polynomial $f$ is called semi-weighted homogeneous of type $(d; {\bf w})$ if $f$ is of the form $f=f_0+g$ where $f_0$ is a weighted homogeneous polynomial of type $(d; {\bf w})$ with an isolated singularity at the origin $O$ in $\C^n$, and $f=f_0$ or $\text{ord}_{{\bf w}}(f-f_0)>d$ where $\text{ord}_{{\bf w}}(f)=\text{min}\{|x^\alpha|_{{\bf w}} : x^\alpha \text{ is a monomial of  } f\}$. A monomial of $g$ is called an upper of $f_0$.
\end{enumerate}
\end{defn}

\begin{table}[ht]
\begin{center}
\caption{List of inner modality 2 singularities}~\label{modality2} 
\begin{tabular}{|c|c|c|c|} 
 \hline
type & weighted homo. $f_0$ & upper monomials & note\\ \hline
$E_{18}$ & $x^3+y^{10}$ & $xy^7, xy^8$ & \\ \hline
$E_{19}$ & $x^3+xy^7$ & $y^{11}, y^{12}$ & \\  \hline
$E_{20}$ & $x^3+y^{11}$ & $xy^{8}, xy^{9}$ & \\  \hline
$W_{17}$ & $x^4+xy^5$ & $y^{7}, y^{8}$ & \\  \hline
$W_{18}$ & $x^4+y^7$ & $x^2y^4, x^2y^5$ & \\  \hline
$Z_{17}$ & $x^3y+y^8$ & $xy^{6}, xy^{7}$ & \\  \hline
$Z_{18}$ & $x^3y+xy^6$ & $y^{9}, y^{10}$ & \\  \hline
$Z_{19}$ & $x^3y+y^9$ & $xy^{7}, xy^{8}$ & \\  \hline
$Q_{16}$ & $x^3+yz^2+y^7$ & $xy^{5}, xz^{2}$ & \\  \hline
$Q_{17}$ & $x^3+yz^2+xy^5$ & $y^{8}, y^{9}$ & \\  \hline
$Q_{18}$ & $x^3+yz^2+y^8$ & $xy^{6}, xz^{2}$ & \\  \hline
$S_{16}$ & $x^2z+yz^2+xy^4$ & $y^{6}, z^{3}$ & \\  \hline
$S_{17}$ & $x^2z+yz^2+y^6$ & $y^{4}z, z^{3}$ & \\  \hline
$U_{16}$ & $x^3+xz^2+y^5$ & $y^2z^{2}, y^3z^{2}$ & \\  \hline
$J_{16}$ & $x^3+y^9+u_1x^2y^3$ & $y^{10}$ & $4u_1^3+27\neq 0$\\  \hline
$W_{15}$ & $x^4+y^6+u_1x^2y^3$ & $y^{7}$ & $u_1^2-4\neq 0$\\  \hline
$Z_{15}$ & $x^3y+y^7+u_1x^2y^3$ & $y^{8}$ & $4u_1^3+27\neq 0$ \\  \hline
$Q_{14}$ & $x^3+yz^2+u_1x^2y^2+xy^4$ & $y^{7}$ & $u_1^2-4\neq 0$\\  \hline
$S_{14}$ & $x^2z+yz^2+y^5+u_1y^3z$ & $z^{3}$ & $u_1^2-4\neq 0$ \\  \hline
$U_{14}$ & $x^3+xz^2+u_1xy^3+y^3z$ & $yz^{3}$ & $u_1^2+1\neq 0$\\  \hline
 \end{tabular} 
\end{center}
\end{table}

Note that the Milnor number at the origin of $ f= f_0 + g $ and that of $ f_0 $ are same, and the embedded topological type of $ f=f_0 +g $ singularity and that of $ f_0 $ are same \cite{LeR76,Va}. 
Accordingly, $ f=f_0 +g $ is called a $ \mu$-constant deformation of $ f_0. $

In \cite{YS}, Yoshinaga and Suzuki gave lists of normal forms of quasihomogeneous (weighted homogeneous) functions with inner modality $\le 4$. Table~\ref{modality2} quoted from \cite{YS} is the list of inner modality~2 singularities.

For example, $E_{18}$ in the table means the following. 
\begin{enumerate}
\item[(i)] $ f_0(x,y) = x^3+y^{10} $ is a weighted homogeneous polynomial (of type $(30, (10,3))$.
\item[(ii)] The Milnor number of $ f_0(x,y) $ at the origin is equal to $ 18. $
\item[(iii)] $ f(x,y) = f_0(x,y) + c_{(1,7)}xy^7+ c_{(1,8)}xy^8 $ is a semi-weighted homogeneous polynomial (of type (30, (10,3)) where $c_{(1,7)},c_{(1,8)} \in \C$.
\end{enumerate}

%%%%%%%%%%%%%%%%%%%%%%%%%%%%%%%%%%%%%%%%%%%%%%%%%%
\section{Comprehensive Gr\"obner system approach}

Here we briefly recall a comprehensive Gr\"obner system approach to compute parametric $b$-functions and review our previous results reported in \cite{NT17a} on the computation of $b$-functions associated with  $ \mu$-constant deformations.

Let $D=\C[x,\partial]$ denote the Weyl algebra, the ring of linear partial differential operators with coefficients in $\mathbb{C}[x]$, where $\partial = \{\partial_1,\ldots,\partial_n\}$, $\partial_i =\frac{\partial}{\partial x_i}$, i.e.,
$$\displaystyle D=\SetDef{\sum_{\beta \in \N^n}h_{\beta}(x)\partial^\beta}{h_{\beta}(x)\in \C[x]}.$$ 
Throughout the paper we assume that a linear partial differential operator is always represented in the canonical form : each power product of a partial differential operator is written as $x^\alpha{\partial}^{\beta}$ where $\alpha, \beta \in \N^n$.

Let $u=\{u_1,\ldots,u_m\}$ be variables such that $u \cap x=\emptyset$, $D[u]$ a ring of partial differential operators with coefficients in a polynomial ring $(\C[u])[x]$, i.e., $$D[u]=\SetDef{\sum_{\beta \in \N^n}h_{\beta}(u,x)\partial^\beta }{h_{\beta}(u,x)\in (\C[u])[x]}.$$ 
Let $D\langle s, \partial_t\rangle$ (or $D[u]\langle s, \partial_t\rangle$) denote the Poincar\'e-Birkhoff-Witt (PBW) algebra $D\otimes_{\C} \C[s,\partial_t]$ (or $D[u]\otimes_{\C} \C[s,\partial_t]$) with a non-commutative relation 
 $\partial_t s=s\partial_t-\partial_t$ 
and  commutative relations $\partial_t x_i=x_i \partial_t, \partial_i s=s \partial_i, s x_i=x_i s, \partial_t \partial_i=\partial_ i\partial_t$ ($1\le i \le n$). 
There exist algorithms and implementations to compute Gr\"obner bases of given ideals in the non-commutative rings $D$ and $D\langle s, \partial_t\rangle$ \cite{kw90,LM08}.

For $g_1,\ldots, g_r \in \C[u]$, $\V(g_1,\ldots,g_r) \subseteq \C^m$ denotes the affine variety of $g_1,\ldots, g_r$, i.e., $\V(g_1,\ldots,g_r)=\{\bar{u}\in \C^m | g_1(\bar{u})=\cdots =g_r(\bar{u})=0\}$. For $g_1,\ldots,g_r, g'_1,\ldots,g'_{r'} \in \C[t]$, we call an algebraic constructible set $\V(g_1,\ldots,g_r) \backslash \V(g'_1,\ldots,g'_{r'})\subseteq \C^m$  a stratum. Notations $\U_1, \U_2,$ $\ldots, \U_\ell$ are frequently used to represent strata.

For every $\bar{u} \in \C^m$, the canonical specialization homomorphism 
$\sigma_{\bar{u}} : D[u]\langle s, \partial_t\rangle \rightarrow D\langle s, \partial_t\rangle$ 
(or $D[u] \rightarrow D$) is defined as the map that substitutes $u$ by $\bar{u}$ in $p(u,x,\partial, s, $ $\partial_t) \in D[u]\langle s, \partial_t\rangle$. The image $\sigma_{\bar{u}}$ of a set $F$ is denoted by $\sigma_{\bar{u}}(F)=\{\sigma_{\bar{u}}(p) | p \in F\} \subset D\langle s, \partial_t\rangle$. 
A symbol $\Mono(x \cup \partial \cup \{s, \partial_t\})$ is the set of monomials of $x \cup \partial \cup \{s, \partial_s\}$.

The main tool to compute $b$-functions associated with $\mu$-constant deformations, is a comprehensive Gr\"obner system in the PBW algebra. We adopt the following as a definition of comprehensive Gr\"obner systems.

\begin{defn}[CGS]
Let $\succ$ be a monomial order on $\Mono(x \cup \partial \cup \{s,\partial_t\})$.  Let $F$ be a subset of $D[u]\langle s, \partial_t\rangle$, $\U_1, \U_2, \ldots, \U_\ell$ strata in $\C^m$ and $G_1,\ldots,G_\ell$ subsets in $D[u]\langle s, \partial_t\rangle$. If a finite set ${\mathcal G}=\{(\U_1,G_1),\ldots,(\U_\ell,G_\ell)\}$ of pairs satisfies properties such that  
\begin{enumerate}
\item $\U_i \neq \emptyset$ and $\U_i \cap \U_j= \emptyset$ for $1\le i\neq j \le \ell$, 

\item for all $\bar{u} \in \U_i$, $\sigma_{\bar{u}}(G_i)$ is a minimal Gr\"obner basis of $\Id(\sigma_{\bar{u}}(F))$ w.r.t. $\succ$ in $D\langle s, \partial_t\rangle$, and 
\item for all $\bar{u} \in \U_i$ and $p \in G_i$, $\sigma_{\bar{u}}(\hc(p))\neq 0$ where $\hc(p)$ is the head coefficient of $p$ in $\C[u]$,
\end{enumerate}
${\mathcal G}$ is called a comprehensive Gr\"obner system (CGS) on $\U_1 \cup \cdots \cup \U_\ell$ for $\Id(F)$ w.r.t. $\succ$. We simply say that ${\mathcal G}$ is a comprehensive Gr\"obner system for $\Id(F)$ if $\U_1 \cup \cdots \cup \U_\ell=\C^m$. 
\end{defn} 

In our previous papers \cite{NOT16,NOT18}, algorithms and implementations for computing comprehensive Gr\"obner systems in PBW algebras are introduced.

%%%%%%%%%%%%%%%%%%%%%%%%%%%%%%%%%%%%%%%%%%%%%%%%%%%%%%%%%%%%%%%%%%%%%%%%%%
\subsection{Global $b$-functions}

Let $f$ be a non-constant polynomial in $\mathbb{C}[x]$. Then, the annihilating ideal of $f^s$ is 
\begin{center}
$\text{Ann}(f^s)= \{p \in  D[s] | pf^s = 0\}$ 
\end{center}
where $s$ is an indeterminate, and 
$$\displaystyle D[s]=\SetDef{\sum_{k \in \N, \beta \in \N^n}h_{k,\beta}(x)s^k \partial^\beta }{ h_{k,\beta}(x)\in \C[x]}.$$

Consider the following left ideal $I$ in the PBW algebra $D\langle s, \partial_t\rangle$.
\begin{center}
$I=\Id\left(f \cdot \partial_t+s, \partial_1+\partial_t\cdot \frac{\partial f}{\partial x_1}, \partial_2+\partial_t\cdot \frac{\partial f}{\partial x_2}, \ldots,\partial_n+\partial_t\cdot \frac{\partial f}{\partial x_n} \right).
$
\end{center}
Brian\c{c}on and Maisonobe show in \cite{BM} that 
$\text{Ann}(f^s)=I\cap D[s].$ 
Thus, a basis of the ideal $\text{Ann}(f^s)$ can be obtained by the Gr\"obner basis computation of $I$ w.r.t. an elimination order for $\partial_t$. 

The \textit{global $b$-function} or the global \textit{Bernstein-Sato polynomial} of $f$ is defined as the monic generator $b_f(s)$ of  $(\text{Ann}(f^s) + \Id(f))\cap  \mathbb{C}[s]$ where $\Id(f)$ is the ideal generated by $f$. It is known that the $b$-function of $f$ always has $s + 1$ as a factor and has a form $(s + 1)\tilde{b}_f (s)$, where $\tilde{b}_f (s) \in \mathbb{C}[s]$. The polynomial $\tilde{b}_f(s)$ is called the (global) \textit{reduced $b$-function} of $f$. The reduced $b$-function $\tilde{b}_f(s)$ can be obtained by computing a Gr\"obner basis of $\text{Ann}(f^s) + \Id(f,\frac{\partial f}{\partial x_1},\ldots,\frac{\partial f}{\partial x_n})$ w.r.t. a block order $\{x, \partial\} \gg s$.

Let $f$ be a parametric polynomial in $(\mathbb{C}[u])[x]$ where $u$ are regarded as parameters. 
As mentioned previously, a CGS of the $ \text{Ann}(f^s) $ is computable by the algorithm for computing CGS's in PBW algebras. Accordingly, global $b$-functions with parameters are computable by using CGS's of the ideals  $\text{Ann}(f^s) + \text{Id}(f)$ and $\text{Ann}(f^s) + \Id(f,\frac{\partial f}{\partial x_1},\ldots,\frac{\partial f}{\partial x_n})$. We refer the reader to \cite{NOT16} and \cite{NOT18} for details.

\noindent
\mbox{}\hrulefill\\
{\bf Algorithm 1}. (Global $b$-functions)\vspace{-2.0mm} \\
\mbox{}\hrulefill\\
{\bf Input:} $\U \subseteq \C^m$, $f \in ({\C}[u])[x]$: $f$ is a non-constant polynomial with parameter $u$, \\
{\bf Output:} ${\mathcal G}=\{(\U_1,\tilde{b}_1(s)), \ldots, (\U_\ell,\tilde{b}_\ell(s))\}$: for each $i \in \{1,\ldots, \ell\}$ and $\forall \bar{u} \in \U_i$, $\tilde{b}_i(s)$ is the reduced $b$-function of $\sigma_{\bar{u}}(f)$ and $\U=\bigcup_{i=1}^\ell \U_i$.\\
{\bf BEGIN} \\
${\mathcal G}\gets \emptyset$; \ $J\gets \{\frac{\partial f}{\partial x_1},\ldots,\frac{\partial f}{\partial x_n}\}$; \\ 
$I\gets \Id(f \cdot \partial_t+s, \partial_1+\partial_t\cdot \frac{\partial f}{\partial x_1}, \partial_2+\partial_t\cdot \frac{\partial f}{\partial x_2}, \ldots,\partial_n+\partial_t\cdot \frac{\partial f}{\partial x_n})$;\\
${\mathcal P} \gets$ Compute a CGS of $I$ w.r.t. an elimination order for $\partial_t$ on $\U$; \\
{\bf while} ${\mathcal P} \neq \emptyset$ {\bf do} \\
 \ \ \ Select $(\U', P)$ from ${\mathcal P}$; ${\mathcal P}\gets {\mathcal P}\backslash \{(\U', P)\}$; \\
 \ \ \ ${\mathcal B} \gets$ Compute a CGS of $\Id(J \cup (P\cap D[s]))$ on $\U'$ w.r.t. a block order $\{x, \partial\} \gg s$; \ \ \ $(\ast 1)$\\
 \ \ \ \ \ \ {\bf while} ${\mathcal B} \neq \emptyset$ {\bf do} \\
 \ \ \ \ \ \ \ \ \ Select $(\U'', B)$ from ${\mathcal B}$; ${\mathcal B}\gets {\mathcal B}\backslash \{(\U'', B)\}$; \\
 \ \ \ \ \ \ \ \ \  ${\mathcal G}\gets {\mathcal G}\cup \{(\U'',B\cap \C[s])\}$;\\
 \ \ \ \ \ \ {\bf end-while}\\
{\bf end-while} \\
{\bf return} ${\mathcal G}$;\\
{\bf END} \vspace{-3mm}\\
\noindent 
\mbox{}\hrulefill \vspace{-3mm} \\
\noindent 

%Algorithm~1 have been implemented in the computer algebra system {\sf Risa/Asir} \cite{NT92}.\\

Note that the rationality of the roots of $b$-functions has been shown by \cite{Kashi}. Hence, the $b$-functions can be factorized into linear factors over $\Q$.

\begin{exmp}\label{ex1}
Let us consider $U_{16}$ singularity $f=x^3+xz^2+y^5+u_1y^2z^{2}+u_2y^3z^{2}$ where $u_1, u_2$ are parameters. As $f$ is a semi-weighted homogeneous, the Milnor number of the singularity at the origin is 16. Set 
$
b_{\text{st}}(s)=(s+\frac{13}{15})(s+\frac{16}{15})(s+\frac{18}{15})(s+\frac{19}{15})(s+\frac{21}{15})(s+\frac{22}{15})(s+\frac{23}{15})(s+\frac{24}{15})(s+\frac{26}{15})(s+\frac{27}{15}).$ Then the reduced $b$-functions of $f$ is the following. 
\begin{enumerate}
\setlength{\leftskip}{-4mm}
\item[$\bullet$] If $(u_1,u_2)$ belongs to $\C^2\backslash \V(u_1(27u_1^4+256u_2))$, then $\tilde{b}_{f}(s)=b_{\text{st}}(s)(s+\frac{14}{15})(s+\frac{17}{15})$.
\item[$\bullet$] If $(u_1,u_2)$ belongs to $\V(27u_1^4+256u_2)\backslash \V(u_1,u_2)$, then $\tilde{b}_{f}(s)=b_{\text{st}}(s)(s+\frac{3}{2})(s+\frac{14}{15})(s+\frac{17}{15})$.
\item[$\bullet$] If $(u_1,u_2)$ belongs to $\V(u_1)\backslash \V(u_1,u_2)$, then $\tilde{b}_{f}(s)=b_{\text{st}}(s)(s+\frac{17}{15})(s+\frac{29}{15})$.
\item[$\bullet$] If $(u_1,u_2)$ belongs to $\V(u_1,u_2)$, then $\tilde{b}_{f}(s)=b_{\text{st}}(s)(s+\frac{29}{15})(s+\frac{32}{15})$.
\end{enumerate}
%\begin{table}[ht]
%\begin{center}
%\caption{List of reduced $b$-functions}~\label{u16} 
%{\small 
%{\renewcommand\arraystretch{1.2}
%\begin{tabular}{|l|l|} 
% \hline
%stratum & (global) \ $\tilde{b}_{f}(s)$ \\ \hline
%$\C^2\backslash \V(u_1(27u_1^4+256u_2))$ &
%$B_{U16}(s)(s+\frac{14}{15})(s+\frac{17}{15})$ \\\hline
%$\V(27u_1^4+256u_2)\backslash \V(u_1,u_2)$ & 
%$B_{U16}(s)(s+\frac{3}{2})(s+\frac{14}{15})(s+\frac{17}{15})$ \\\hline
%$\V(u_1)\backslash \V(u_2)$ & 
%$B_{U16}(s)(s+\frac{17}{15})(s+\frac{29}{15})$ \\\hline
%$\V(u_1,u_2)$ & 
%$B_{U16}(s)(s+\frac{29}{15})(s+\frac{32}{15})$ \\\hline
%\end{tabular}
%}}
%\end{center} 
%\end{table}

Note that if $(u_1,u_2)$ belongs to $\V(27u_1^4+256u_2)\backslash \V(u_1,u_2)$, then $f$ has three isolated singularities $(0,0,0)$ and $\displaystyle \left(\frac{3u_1^3}{64u_2^2},\frac{u_1}{4u_2},\pm\sqrt{\frac{-16}{27u_1^2u_2^2}}\right)$ in $\C^3$. In other cases, $f$ has one isolated singularity $(0,0,0)$.
\end{exmp}

%%%%%%%%%%%%%%%%%%%%%%%%%%%%%%%%%%%%%%%%%%%%%%%%%%%%%%%%%%%%%%%%%%%%%%%%%%
\subsection{Local $b$-functions}\label{sb53}
Let 
\begin{center}
$\displaystyle {\mathcal D}[s]=\SetDef{\sum_{k \in \N, \beta \in \N^n}h_{k,\beta}(x)s^k\partial^\beta}{h_{k,\beta}(x)\in \C[x]_q}$
\end{center}
where 
$\C[x]_q=\{g_1(x)/g_2(x) | \ g_1(x),g_2(x)\in \C[x], g_2(q)\neq 0 \}$ 
the localization of $\C[x]$ at $q \in \C^n$. The local $b$-function of a non-constant polynomial $f \in \C[x]$ at $q$ is defined as the monic polynomial $b_{f,q}(s)$ of the minimal degree for $p \in {\mathcal D}[s]$ and $b_{f,q}(s) \in \C[s]$ satisfying $p\cdot f^{s+1}=a(x) b_{f,q}(s)\cdot f^s$ where $a(x) \in \C[x]_q$. The reduced $b$-function of $f$ at $q$, written as $\tilde{b}_{f,q}(s)$,  is $b_{f,q}(s)/(s+1)$.

In \cite{MN91}, Mebkhout and Narv\'aez-Macarro show the fact 
$$\tilde{b}_f(s)=\text{LCM}(\tilde{b}_{f,q}(s) | q \in \text{Sing}(f))$$ 
where $\text{Sing}(f)$ is the singular locus of $\V(f)$, i.e., $\text{Sing}(f)=\V(f,\frac{\partial f}{\partial x_1}, \ldots, \frac{\partial f}{\partial x_n})$.

We borrow from \cite{yano78} the following theorem.  

\begin{thm}\label{yano}
Let $f \in \C[x]$, $\gamma \in \Q$ and  set 
\begin{center}
$M_{(\gamma,f)}=D[s]/ (\text{Ann}(f^s)+\Id(f,\frac{\partial f}{\partial x_1},\ldots, \frac{\partial f}{\partial x_n})+\Id(s-\gamma)).$
\end{center}
Then, if $\tilde{b}_f(\gamma)\neq 0$, then $M_{(\gamma,f)}=\{0\}$, and 
if $\tilde{b}_f(\gamma)= 0$, then $M_{\gamma}$ is a holonomic $D$-module and $\text{supp}(M_{(\gamma,f)})\subseteq \text{Sing}(f)$ where $\text{supp}(M_{(\gamma,f)})$ is the support of $M_{(\gamma,f)}$.
\end{thm}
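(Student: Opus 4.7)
The plan is to separate the two assertions and appeal in both cases to the fact recalled in Section~3 that $\tilde{b}_f(s)$ generates the elimination ideal $\bigl(\text{Ann}(f^s)+\Id(f,\partial f/\partial x_1,\ldots,\partial f/\partial x_n)\bigr)\cap\C[s]$, so in particular $\tilde{b}_f(s)$ itself already lies in $\text{Ann}(f^s)+\Id(f,\partial f/\partial x_1,\ldots,\partial f/\partial x_n)$.

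For the first implication, suppose $\tilde{b}_f(\gamma)\neq 0$. I would reduce the above membership modulo the additional central relation $s-\gamma$: the polynomial $\tilde{b}_f(s)$ specialises to the scalar $\tilde{b}_f(\gamma)\in\C\setminus\{0\}$, which is a unit. Hence the defining ideal of $M_{(\gamma,f)}$ contains a unit, forcing $M_{(\gamma,f)}=\{0\}$.

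For the second implication, assume $\tilde{b}_f(\gamma)=0$. The support statement is immediate: since $\Id(f,\partial f/\partial x_1,\ldots,\partial f/\partial x_n)$ annihilates $M_{(\gamma,f)}$, its $\mathcal{O}$-support is contained in $\V(f,\partial f/\partial x_1,\ldots,\partial f/\partial x_n)=\text{Sing}(f)$. For holonomicity I would invoke the classical Bernstein--Bj\"ork result that $D[s]/\text{Ann}(f^s)\simeq D[s]\cdot f^s$ is a holonomic $D[s]$-module, equivalently that its characteristic variety in $T^{*}\C^{n}\times T^{*}\C_{s}$ has dimension $n+1$. Because $s$ is central in $D[s]$, quotienting by $s-\gamma$ cuts this characteristic variety by a hyperplane and hence drops its dimension by at most one, so the intermediate module $N_\gamma:=D[s]/(\text{Ann}(f^s)+\Id(s-\gamma))$ is a coherent $D$-module with characteristic dimension at most $n$, i.e.\ holonomic. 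The target $M_{(\gamma,f)}$ is the further quotient $N_\gamma/\Id(f,\partial f/\partial x_1,\ldots,\partial f/\partial x_n)\cdot N_\gamma$, and quotients of holonomic modules remain holonomic, completing the argument.

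The principal obstacle is making the specialisation step rigorous precisely when $\gamma$ is a root of $b_f(s)$: the hyperplane $\{s=\gamma\}$ may fail to be transverse to $\text{Char}_{D[s]}(D[s]\cdot f^s)$, so the dimension drop cannot be read off from a bare transversality argument. I would handle this by choosing a good filtration on $D[s]\cdot f^s$ compatible with multiplication by the central element $s-\gamma$ and passing to the associated graded, where a Krull-type dimension bound for principal quotients yields $\dim \text{Char}(N_\gamma)\le n$ unconditionally; Bernstein's inequality then pins the dimension down to exactly $n$ as soon as $M_{(\gamma,f)}$ is nonzero, securing the holonomicity claim.
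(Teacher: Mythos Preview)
The paper does not prove this theorem at all: it is stated with the preamble ``We borrow from \cite{yano78} the following theorem'' and no argument is given. So your proposal is not competing against a proof in the paper but standing on its own.

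As such, your outline is sound. The first implication and the support inclusion are immediate from the definition of $\tilde{b}_f(s)$ as the generator of the elimination ideal, exactly as you say. For holonomicity, the route via the relative holonomicity of $D[s]f^s$ over $D[s]$ followed by specialisation at $s=\gamma$ is the standard one, and you have correctly isolated the only genuine technical point: controlling the dimension of the characteristic variety after cutting by $s-\gamma$. Your proposed fix through a good filtration and a Krull-type bound on the associated graded is the right mechanism; it helps to note in addition that $D[s]f^s$ is $\C[s]$-torsion-free (it embeds in $\C[x,f^{-1},s]f^s$, which is $\C[s]$-free), so that $s-\gamma$ acts injectively on $D[s]f^s$ and the short exact sequence $0\to D[s]f^s\xrightarrow{s-\gamma}D[s]f^s\to N_\gamma\to 0$ is available for the dimension count. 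One cosmetic point: in your last sentence Bernstein's inequality should be applied to $N_\gamma$ rather than to $M_{(\gamma,f)}$, but since $M_{(\gamma,f)}$ is a quotient of $N_\gamma$ this makes no difference to the conclusion.
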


\begin{lem}\label{lem1}
Using the same notation as in Theorem~\ref{yano}, let $\gamma$ be a root of $\tilde{b}_{f,q}(s)=0$. Then, $q \in \text{supp}(M_{(\gamma,f)})$.
\end{lem}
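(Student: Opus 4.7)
\medskip
\noindent\textbf{Proof plan.} The plan is to work in the localization at $q$ and produce a non-zero element of $(M_{(\gamma,f)})_q$, since by the usual definition of support for a coherent ${\mathcal D}$-module, $q\in\text{supp}(M_{(\gamma,f)})$ is equivalent to $(M_{(\gamma,f)})_q\neq 0$. Set
$$N_q := \mathcal{D}[s]\Big/\Bigl(\text{Ann}(f^s)+\Id\bigl(f,\tfrac{\partial f}{\partial x_1},\ldots,\tfrac{\partial f}{\partial x_n}\bigr)\Bigr)\mathcal{D}[s],$$
where $\mathcal{D}[s]$ is the localized operator ring introduced in \S\ref{sb53}. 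Since $(s-\gamma)$ is central, $(M_{(\gamma,f)})_q\cong N_q/(s-\gamma)N_q$, so the task reduces to proving this quotient is non-zero.

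The first step I would carry out is to invoke the local counterpart of the Gr\"obner-basis characterization used in Algorithm~1, namely that $\tilde{b}_{f,q}(s)$ is the monic generator of the principal ideal
$$\Bigl(\text{Ann}(f^s)+\Id\bigl(f,\tfrac{\partial f}{\partial x_1},\ldots,\tfrac{\partial f}{\partial x_n}\bigr)\Bigr)\cap \C[s]$$
taken inside $\mathcal{D}[s]$. Two consequences follow immediately. First, $\tilde{b}_{f,q}(s)$ kills the class of $1$ in $N_q$, and since $s$ is central it annihilates all of $N_q$. Conversely, any $p(s)\in\C[s]$ that annihilates $N_q$ satisfies $p(s)\cdot 1=0$ in $N_q$, so $p(s)$ lies in the ideal above and is therefore a multiple of $\tilde{b}_{f,q}(s)$. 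Hence $\tilde{b}_{f,q}(s)$ is precisely the minimal polynomial of the $\C[s]$-action on $N_q$.

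The second step is a Chinese-remainder argument. Factor $\tilde{b}_{f,q}(s)=\prod_j (s-\alpha_j)^{n_j}$ over $\Q$ (rationality is Kashiwara's theorem cited at the end of \S3.1) with distinct $\alpha_j$. Then $N_q$ is a module over $\C[s]/\tilde{b}_{f,q}(s)\C[s]\cong \prod_j \C[s]/(s-\alpha_j)^{n_j}$, and the orthogonal idempotents of this product decompose $N_q=\bigoplus_j N_q^{(j)}$ with $(s-\alpha_j)$ nilpotent of index $\le n_j$ on $N_q^{(j)}$. Minimality of $\tilde{b}_{f,q}$ forces $N_q^{(j)}\neq 0$ for every $j$: if some $N_q^{(j)}$ vanished, then $\tilde{b}_{f,q}(s)/(s-\alpha_j)^{n_j}$ would already annihilate $N_q$, contradicting Step~1. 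For $\gamma=\alpha_j$, nilpotence of $s-\gamma$ on the non-zero module $N_q^{(j)}$ rules out the equality $N_q^{(j)}=(s-\gamma)N_q^{(j)}$, because iterating $n_j$ times would force $N_q^{(j)}=0$; hence $N_q^{(j)}/(s-\gamma)N_q^{(j)}\neq 0$, which gives $N_q/(s-\gamma)N_q\neq 0$ and therefore $q\in\text{supp}(M_{(\gamma,f)})$.

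The delicate part of the plan is Step~1: confirming that $\tilde{b}_{f,q}(s)$ really generates the stated intersection in the localized ring $\mathcal{D}[s]$ rather than in $D[s]$. This is the natural local analogue of the definition in \S\ref{sb53} combined with the global Gr\"obner characterization in Algorithm~1, but bridging the two requires either faithful flatness of the localization $\C[x]\hookrightarrow \C[x]_q$ or a direct manipulation of the defining relation $p\cdot f^{s+1}=a(x)b_{f,q}(s)\cdot f^s$. Once this local characterization is in hand, the CRT/primary-decomposition argument in Step~2 is purely formal.
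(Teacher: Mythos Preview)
The paper states this lemma without proof, so there is no argument to compare against. Your approach is correct and is the natural one: localizing $M_{(\gamma,f)}$ at $q$ gives $N_q/(s-\gamma)N_q$, and once $\tilde{b}_{f,q}(s)$ is identified as the minimal polynomial of the central variable $s$ on $N_q$, the primary decomposition of $N_q$ over $\C[s]/(\tilde{b}_{f,q})$ produces a non-zero $\gamma$-component on which $s-\gamma$ is nilpotent, hence not surjective.

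The point you flag in Step~1 is the right one to isolate, but it is standard. It is the local analogue of the fact invoked just before Algorithm~1, that $\tilde{b}_f(s)$ generates $\bigl(\text{Ann}_{D[s]}(f^s)+\Id(f,\tfrac{\partial f}{\partial x_1},\dots,\tfrac{\partial f}{\partial x_n})\bigr)\cap\C[s]$. Flatness of $\C[x]\to\C[x]_q$ identifies $\text{Ann}_{D[s]}(f^s)\cdot\mathcal{D}[s]$ with $\text{Ann}_{\mathcal{D}[s]}(f^s)$ and carries the global characterization to the local one. Note, incidentally, that your setup actually yields the equivalence $q\in\text{supp}(M_{(\gamma,f)})\Longleftrightarrow\tilde{b}_{f,q}(\gamma)=0$: when $\gamma$ is not a root, $s-\gamma$ acts invertibly on every primary component and $N_q/(s-\gamma)N_q=0$. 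This biconditional is exactly what Algorithm~2 uses.
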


Algorithms and implementations for computing $M_{(\gamma,f)}$ and $\text{supp}(M_{(\gamma,f)})$, have been already introduced in \cite{LM08} and in \cite{NOT16,NOT18,Oaku1}. \\

After here, we consider local $b$-functions of semi-weighted homogeneous polynomials at $O \in \C^n$. For a semi-weighted homogeneous polynomial, the following property is known.  

\begin{thm}[C.2.1.6. \cite{B89}]\label{single}
For a semi-weighted homogeneous polynomial $f\in \C[x]$, the local $b$-function $ \tilde{b}_{f,0}(s) $ is square-free, i.e.,  $\tilde{b}_{f,0}(s)=0$ has no multiple roots.
\end{thm}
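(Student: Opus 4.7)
The plan is to reduce square-freeness of $\tilde{b}_{f,0}(s)$ to semisimplicity of the residue of the Gauss--Manin connection on the Brieskorn lattice at $0$. For an isolated singularity $f$ at the origin, the reduced local $b$-function $\tilde{b}_{f,0}(s)$ coincides (up to the usual shift $s\mapsto -s-1$) with the minimal polynomial of the action of $s$ on $H''_0/tH''_0$, where $H''_0=\Omega^n_{\C^n,0}/df\wedge d\Omega^{n-2}_{\C^n,0}$ is the Brieskorn module of $f$. Because $f$ is a $\mu$-constant deformation of $f_0$, both Brieskorn lattices are free $\C\{t\}$-modules of the same rank $\mu=\mu(f_0)$, so the question is reduced to whether a certain finite-dimensional $\C$-linear endomorphism is diagonalizable.

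First I would dispatch the weighted homogeneous case $f=f_0$. Choose weighted homogeneous monomials $x^{\alpha_1},\ldots,x^{\alpha_\mu}$ whose classes form a basis of the Milnor algebra $\C\{x\}/J(f_0)$. The Euler identity $\sum_{i=1}^n w_i x_i\,\partial_i f_0 = d\,f_0$ then shows that each class $[x^{\alpha_i}dx]\in H''_0/tH''_0$ is an eigenvector of the residue, with rational eigenvalue $(|x^{\alpha_i}|_{{\bf w}}+w_1+\cdots+w_n)/d$. The residue is thus diagonalizable, and $\tilde{b}_{f_0,0}(s)$ is square-free.

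For the general semi-weighted homogeneous $f=f_0+g$, the $\mu$-constancy together with Nakayama implies that the same monomials $x^{\alpha_1},\ldots,x^{\alpha_\mu}$ form a basis of $\C\{x\}/J(f)$, hence of $H''_0(f)$. I would then equip $\Omega^n_{\C^n,0}$ with the weighted filtration $F_k=\{\omega\,:\,\mathrm{ord}_{{\bf w}}(\omega)\ge k\}$ and transfer it to $H''_0(f)$. Since $\mathrm{ord}_{{\bf w}}(g)>d$, the relations coming from $df\wedge d\Omega^{n-2}$ share weighted principal parts with those coming from $df_0\wedge d\Omega^{n-2}$, so the associated graded $\mathrm{gr}^F H''_0(f)$ is canonically identified with $\mathrm{gr}^F H''_0(f_0)$. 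A parallel weighted-order analysis of the action of $t\partial_t$ (using $f=f_0+g$ and $\mathrm{ord}_{{\bf w}}(g)>d$) shows that the associated graded residue endomorphism agrees with the diagonal one found for $f_0$. A standard formal argument then concludes: an endomorphism of a finite-dimensional filtered vector space whose associated graded is semisimple, and whose eigenvalues separate the graded pieces, is itself semisimple.

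The main obstacle I expect is the last step of the third paragraph: namely, making precise that the residue of the Gauss--Manin connection on $H''_0(f)$ preserves the weighted filtration in a strict enough sense that passage to the associated graded is faithful, and that the eigenvalues of the $f_0$ residue really do distinguish the graded pieces of $H''_0(f)$. Concretely, one must carry out careful bookkeeping of $\mathrm{ord}_{{\bf w}}$ applied both to $dg\wedge d\Omega^{n-2}$ and to the correction term $g\partial_t$ appearing in $f\partial_t+s$ when transported to $H''_0$. Once this weighted-order control is established, the semisimplicity of the residue, and hence the square-freeness of $\tilde{b}_{f,0}(s)$, transfers from the weighted homogeneous model $f_0$ to the semi-weighted homogeneous deformation $f$ without further difficulty.
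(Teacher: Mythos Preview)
The paper does not supply its own proof of this theorem; it is quoted verbatim as C.2.1.6 of Brian\c{c}on--Granger--Maisonobe--Miniconi \cite{B89}. So there is nothing in the present paper to compare your argument against beyond the bare citation. Your outline via the Brieskorn lattice and the weighted filtration is a standard and essentially correct route to the statement, and it is in the same spirit as the treatment in the cited reference, which likewise reduces the semi-weighted homogeneous $f=f_0+g$ to its weighted homogeneous initial part $f_0$ by a filtration compatible with ${\bf w}$.

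One genuine caveat in your write-up: in your first paragraph you identify $\tilde{b}_{f,0}(s)$ (up to the shift $s\mapsto -s-1$) with the minimal polynomial of the residue on $H''_0/tH''_0$. Malgrange's theorem gives that identification for the \emph{saturated} lattice $\widetilde{H}''_0$; on the unsaturated Brieskorn lattice one obtains only the microlocal $b$-function, which for a general isolated singularity can differ from $\tilde{b}_{f,0}$. For semi-weighted homogeneous $f$ the Brieskorn lattice is in fact saturated, but that is exactly a consequence of the weighted-order strictness you yourself flag as the ``main obstacle'' in your final paragraph, so you should not invoke it at the outset. Once that strictness is established (the bookkeeping of $\mathrm{ord}_{\bf w}$ on $dg\wedge d\Omega^{n-2}$ and on the correction $g\partial_t$), both the saturation of $H''_0$ and the semisimplicity of the residue follow simultaneously, and square-freeness of $\tilde{b}_{f,0}(s)$ is then immediate. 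Your diagnosis of where the real work lies is therefore accurate; just be careful that the opening identification does not smuggle in what remains to be proved.
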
 

Note that, there is a possibility that the global $b$-function of a semi-weighted homogeneous polynomial has multiple roots, whereas as Theorem~\ref{single} says, the local $b$-function of a semi-weighted homogeneous polynomial has no multiple roots. \\

We turn to parametric cases. Let $f$ be a semi-weighted homogeneous polynomial in $(\C[u])[x]$. Then, by utilizing Lemma~\ref{lem1} and Theorem~\ref{single}, we are able to construct an algorithm for computing local $b$-functions of $f$ at $O \in \C^n$. 

\noindent
\mbox{}\hrulefill\\
{\bf Algorithm 2}. (Local $b$-functions at $O$)\vspace{-2.0mm} \\
\mbox{}\hrulefill\\
{\bf Input:} $\U \subseteq \C^m$, $f=f_0+g \in (\C[u])[x]$: $\forall \bar{u}\in \U$, $\sigma_{\bar{u}}(f_0)$ is a weighted homogeneous polynomial with an isolated singularity at $O$ and $\sigma_{\bar{u}}(f)$ is a semi-weighted homogeneous polynomial, \\
{\bf Output:} ${\mathcal G}_0=\{(\U_1,\tilde{b}_1(s)), \ldots, (\U_\ell,\tilde{b}_\ell(s))\}$: for each $i \in \{1,\ldots, \ell\}$ and $\forall \bar{u} \in \U_i$, $\tilde{b}_i(s)$ is the local reduced $b$-function of $\sigma_{\bar{u}}(f)$ at $O$ (i.e., $\tilde{b}_i(s)=\tilde{b}_{\sigma_{\bar{u}}(f), 0}(s)$) and  $\U=\bigcup_{i=1}^\ell \U_i$. \\
{\bf BEGIN} \\
${\mathcal G}_0\gets \emptyset$; \ ${\mathcal G} \gets$ Execute Algorithm~1 on $\U$;\\
{\bf while} ${\mathcal G} \neq \emptyset$ {\bf do} \\
 \ \ \ Select $(\U', b(s))$ from ${\mathcal G}$; ${\mathcal G}\gets {\mathcal G}\backslash \{(\U', b(s))\}$; \\
 \ \ \ $E \gets $ Compute all roots of $b(s)=0$;\\
 \ \ \ $\tilde{b}\gets 1;$\\
 \ \ \ \ \ \ {\bf while} $E \neq \emptyset$ {\bf do} \\
 \ \ \ \ \ \ \ \ \ Select $\gamma$ from $E$; $E\gets E\backslash \{\gamma\}$; \\
 \ \ \ \ \ \ \ \ \ {\bf if} $O \in \text{supp}(M_{(\gamma,f)})$ {\bf then} \\
 \ \ \ \ \ \ \ \ \ \ \ \ $\tilde{b} \gets \tilde{b}\cdot (s-\gamma)$; \\
 \ \ \ \ \ \ \ \ \ {\bf end-if} \\
 \ \ \ \ \ \ {\bf end-while}\\
 \ \ \ ${\mathcal G}_0\gets {\mathcal G}_0\cup \{(\U',\tilde{b})\}$;\\
{\bf end-while}\\
{\bf return} ${\mathcal G}_0$;\\
{\bf END} \vspace{-3mm} \\
\noindent 
\mbox{}\hrulefill \vspace{-3mm} \\
\noindent

\begin{exmp}
Let us consider Example~\ref{ex1}, again. The Milnor number $\mu$ of the singularity $x^3+xz^2+y^5=0$, at the origin $O$,  is 16, and the $\mu$-constant deformation is given by $f=x^3+xz^2+y^5+u_1y^2z^2+u_2y^3z^2$ where $u_1,u_2$ are parameters. 

 According to Algorithm~2, we compute the local $b$-function $\tilde{b}_{f,0}(s)$. We consider the  case where $ (u_1, u_2) \in \V(27u_1^4+256u_2)\backslash \V(u_1,u_2) $ and examine the holonomic $ D $-module $ M_{(-\frac{3}{2}, f)} $ associated with the factor $ s+\frac{3}{2} $ of $\tilde{b}_{f}(s)$. 
Direct computation shows $$ \text{supp}(M_{(-\frac{3}{2},f)}) = \V(64u_2^2x-3u_1^3,4u_2y-u_1,16u_2^3z^2-u_1^2).$$ 
Since $O \notin \text{supp}(M_{(-\frac{3}{2},f)})$, $ s+\frac{3}{2} $ is not a factor of $\tilde{b}_{f,0}(s)$. The supports associated with other roots, contain the origin $O$. Therefore, we obtain Table~\ref{u162} as $\tilde{b}_{f,0}(s)$ where $b_{\text{st}}(s)$ is from Example~\ref{ex1}.

\begin{table}[ht]
\begin{center}
\caption{List of local $b$-functions}~\label{u162}
{\renewcommand\arraystretch{1.2}
\begin{tabular}{|l|l|} 
 \hline
stratum & (Local) \ $\tilde{b}_{f,0}(s)$ \\ \hline
$\C^2\backslash \V(u_1)$ &
$b_{\text{st}}(s)(s+\frac{14}{15})(s+\frac{17}{15})$ \\\hline
$\V(u_1)\backslash \V(u_2)$ & 
$b_{\text{st}}(s)(s+\frac{17}{15})(s+\frac{29}{15})$ \\\hline
$\V(u_1,u_2)$ & 
$b_{\text{st}}(s)(s+\frac{29}{15})(s+\frac{32}{15})$ \\\hline
\end{tabular}
}
\end{center} 
\end{table}
\end{exmp} 

As the example above shows that Algorithm~2 gives a computation method of $b$-functions associated with $\mu$-constant deformations. We implemented Algorithm~2 in the computer algebra system {\sf Risa/Asir} \cite{NT92}, and we tried to compute all problems of Table~\ref{modality2} for {\bf three months} by using 3 computers: PC1 [OS: Linux, CPU: Xeon E3-1225, 3.2 GHz, Memory: 126 GB], PC2 [OS: Linux, CPU: Xeon E3-1230, 3.3 GHz, Memory: 504 GB] and PC3 [OS: Windows~10, CPU: Core i7-5930k, 3.5 GHz, Memory: 64 GB].

All bases of the annihilating ideals of $f^s$ in Table~\ref{modality2} were successfully obtained. 
It turned out by this computer experiment that the cost of computation of the part $(\ast 1)$ of Algorithm~1 ({\it i.e.}, computing a CGS of $\text{Ann}(f^s)+\Id(f, \frac{\partial f}{\partial x_1}, \ldots, \frac{\partial f}{\partial x_n})$) w.r.t. an elimination  order) is quite high. Among 20 cases, our implementation could not return 12 $b$-functions  within {\bf three months} and output only 8 $b$-functions : $b$-functions of $E_{18}$, $E_{20}$, $W_{18}$, $Z_{17}$, $Q_{16}$, $Q_{17}$, $S_{17}$, $U_{16}$. We see  that the direct use of Algorithm~2 is not adequate for computing $b$-functions associated with $ \mu$-constant deformations. 
In order to overcome  difficulties, we improve the method presented in this section by specializing Algorithm~2 to handle semi-weighted homogeneous cases.

%%%%%%%%%%%%%%%%%%%%%%%%%%%%%%%%%%%%%%%%%%%%%%%%%%
\section{Computing local cohomology solutions to a holonomic $D$-module}

Here we introduce an algorithm for computing local cohomology solutions of the holonomic $D$-module $ M_{(\gamma, f)}$. The algorithm will be utilized as a key tool in the new computation method of $b$-functions.

All local cohomology classes, in this paper, are algebraic local cohomology classes that belong to the set defined by 
$$H_{[O]}^n(\C[x])=\lim_{k\rightarrow \infty}\text{Ext}^n_{\C[x]}(\C[x]/\langle x_1,x_2,\ldots, x_n\rangle^k, \C[x])$$ 
where $\langle x_1,x_2,\ldots, x_n\rangle$ is the maximal ideal generated by $x_1,\ldots, x_n$. We adopt notations used in \cite{NT17} to represent algebraic local cohomology classes, namely, we represent an algebraic local cohomology class $\sum c_{\lambda}
\left[\begin{array}{c}
 1  \\
 x^{\lambda+1}
 \end{array} 
 \right]$ as a polynomial $\sum c_{\lambda}\xi^\lambda$ where $\xi$ is the abbreviation of $n$ variables $\xi_1,\ldots,\xi_n$, $c_{\lambda} \in \C$ and $\lambda=(\lambda_1,\ldots, \lambda_n) \in \N^n$. The multiplication is defined as
$$x^{\alpha}\ast \xi^\lambda
=\left\{
\begin{array}{ll}
 \xi^{\lambda-\alpha}, \ \ \ \ \ \ \ \lambda_i\ge \alpha_i, i=1,\ldots,n,\\
\ \\
 0,  \ \ \ \ \ \ \ \  \ \ \ \ \ \ \ \ \text{otherwise,} 
\end{array}
\right.
$$
where $\alpha=(\alpha_1,\alpha_2,\ldots, \alpha_n) \in \N^n$ and $\lambda-\alpha=(\lambda_1-\alpha_1,\ldots, \lambda_n-\alpha_n) \in \N^n$. The partial derivative by $\frac{\partial}{\partial x_i}$ is defined as
\begin{center}
$
\frac{\partial}{\partial x_i}\ast(
 \xi_1^{\lambda_1}\xi_2^{\lambda_2}\cdots \xi_i^{\lambda_i}\cdots \xi_n^{\lambda_n}) =-(\lambda_i+1)
 \xi_1^{\lambda_1}\xi_2^{\lambda_2}\cdots \xi_i^{\lambda_i+1}\cdots \xi_n^{\lambda_n}.
$ 
\end{center}

Let fix a monomial order $\succ$. For a given algebraic local cohomology class of the form
$$\psi=c_{\lambda}\xi^{\lambda}+\sum_{\xi^{\lambda} \succ \xi^{\lambda'}}c_{\lambda'}\xi^{\lambda'}, \ \ c_{\lambda}\neq 0$$
we call $\xi^\lambda$ the {\it head monomial},  $c_\lambda$ the {\it head coefficient} and $\xi^{\lambda'}$ the {\it lower monomials}. We write the head monomial as $\hht(\psi)$.\\

Let $f$ be a holomorphic function defined on an open neighborhood $X$ of the origin $O$ of the $n$-dimensional complex space $\C^n$, with an isolated singularity at the origin. Let $\gamma$ be a root of the local reduced $b$-function $\tilde{b}_{f,0}(s)$ at $O$. Let $G'$ be a minimal Gr\"obner basis of $\text{Ann}(f^s)+\Id(f, \frac{\partial f}{\partial x_1}, \ldots, $ $\frac{\partial f}{\partial x_n})+\Id(s-\gamma)$ w.r.t. a monomial order satisfying $\{x,\partial\} \gg s$ in $D[s]$. Set $G_{(\gamma,f)}=G'\backslash \{s-\gamma\}$, then $M_{(\gamma,f)}=D/\Id(G_{(\gamma,f)})$. 
We define a set $H_{M_{(\gamma,f)}}$ to be the set of algebraic local cohomology classes in $H^n_{[O]}(\C[x])$ that are annihilated by $G_{(\gamma,f)}$:
$$
H_{M_{(\gamma,f)}}=\SetDef{\psi \in H^n_{[O]}(\C[x])}{h \ast \psi=0, \forall h \in G_{(\gamma,f)}}.
$$

Since $ H_{M_{(\gamma, f)}} $ is the algebraic local cohomology solution space of the holonomic $D$-module $ M_{(\gamma, f)}, $ we have the following. 

\begin{thm}\label{prob}
The set $H_{M_{(\gamma,f)}}$ is a finite dimensional vector space. 
\end{thm}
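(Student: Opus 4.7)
The plan is to use the fact that the isolated singularity hypothesis forces the ideal $\mathrm{Id}(G_{(\gamma,f)})$ to contain a zero-dimensional set of multiplication operators, and then to bound the annihilator subspace in $H^n_{[O]}(\C[x])$ via local duality.

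First, I would unpack the definition of $G_{(\gamma,f)}$. By construction, $G'$ is a Gr\"obner basis of the left ideal $\mathrm{Ann}(f^s)+\mathrm{Id}(f,\partial f/\partial x_1,\ldots,\partial f/\partial x_n)+\mathrm{Id}(s-\gamma)$ in $D[s]$, and after setting $s=\gamma$ and deleting the relation $s-\gamma$, the resulting ideal $\mathrm{Id}(G_{(\gamma,f)})\subset D$ still contains the multiplication operators $f,\partial f/\partial x_1,\ldots,\partial f/\partial x_n$. Consequently every $\psi\in H_{M_{(\gamma,f)}}$ must satisfy
\[
f\ast\psi=0,\qquad \frac{\partial f}{\partial x_i}\ast\psi=0\quad(i=1,\ldots,n),
\]
so $H_{M_{(\gamma,f)}}$ is contained in the subspace
\[
\mathcal{H}_{J(f)}\ :=\ \SetDef{\psi\in H^n_{[O]}(\C[x])}{J(f)\ast\psi=0},
\]
where $J(f)=\mathrm{Id}(\partial f/\partial x_1,\ldots,\partial f/\partial x_n)$ is the Jacobian ideal.

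Next, I would use the isolated singularity hypothesis on $f$. This guarantees that $J(f)$ is zero-dimensional at $O$; equivalently, the Milnor algebra $\C\{x\}/J(f)$ is a finite dimensional $\C$-vector space of dimension equal to the Milnor number $\mu$. Then the standard local duality pairing identifies $\mathcal{H}_{J(f)}$ with the $\C$-linear dual of the Milnor algebra, so
\[
\dim_\C \mathcal{H}_{J(f)}\ =\ \mu\ <\ \infty.
\]
Hence $H_{M_{(\gamma,f)}}\subseteq\mathcal{H}_{J(f)}$ is a subspace of a $\mu$-dimensional space and is therefore itself finite dimensional.

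The main (mild) obstacle is simply to invoke the local duality statement cleanly; once one observes that $f$ and the partial derivatives act as multiplications on local cohomology classes, the rest is a direct consequence of the finite dimensionality of the Milnor algebra. Alternatively, one can argue without duality by noting that any $\psi$ killed by $J(f)$ has its head monomial $\mathrm{hm}(\psi)$ dual to a standard monomial of $\C\{x\}/J(f)$ with respect to a fixed monomial order, so that the set of possible head monomials is finite, yielding the same bound $\dim_\C H_{M_{(\gamma,f)}}\le\mu$.
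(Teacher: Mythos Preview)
Your argument is correct, but the paper takes a different route. The paper's justification is the single sentence preceding the theorem: $M_{(\gamma,f)}$ is a holonomic $D$-module (Theorem~\ref{yano}), and the algebraic local cohomology solution space of a holonomic $D$-module is finite dimensional. Your approach is more elementary and concrete: you bypass holonomicity entirely by observing that the defining ideal of $M_{(\gamma,f)}$ contains the Jacobian generators $\partial f/\partial x_i$ as multiplication operators, so $H_{M_{(\gamma,f)}}$ embeds into the space $\mathcal{H}_{J(f)}$ of local cohomology classes annihilated by the Jacobian ideal, which by local duality (using the isolated singularity hypothesis) has dimension exactly the Milnor number $\mu$. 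This buys you an explicit bound $\dim_{\C} H_{M_{(\gamma,f)}}\le \mu$, consistent with Theorem~\ref{dim}, and it is essentially the mechanism behind Lemma~\ref{hodai} and the finiteness of $H_{F_0}$ that the paper exploits in Algorithm~3. The paper's argument, on the other hand, does not require the isolated singularity assumption at this particular step and appeals instead to general $D$-module theory.
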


Here we introduce an algorithm for computing a basis of the vector space $H_{M_{(\gamma,f)}}$. 

\begin{lem}\label{hodai}
Using the same notation as in above, let $P_0=G_{(\gamma,f)} \cap \C[x]$, $F_0=P_0\cup \{f, \frac{\partial f}{\partial x_1}, \ldots, $ $\frac{\partial f}{\partial x_n}\} \subset \C[x]$. Set 
$$H_{F_0}=\{\psi \in H^n_{[O]}(\C[x]) | h \ast \psi=0, \forall h \in F_0\}.$$
Then, $H_{M_{(\gamma,f)}} \subseteq H_{F_0}$.
\end{lem}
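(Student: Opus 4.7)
The plan is to unpack the definition of $G_{(\gamma,f)}$ and use the block order structure to transfer the containment $f,\partial_1 f,\ldots,\partial_n f\in I$ from $D[s]$ down to $D$, after which the lemma becomes essentially immediate.

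First I would dispose of the $P_0$ part. Since $P_0 = G_{(\gamma,f)}\cap\C[x]\subseteq G_{(\gamma,f)}$, every $\psi\in H_{M_{(\gamma,f)}}$ is by definition annihilated by each element of $P_0$. So the only nontrivial content of $H_{M_{(\gamma,f)}}\subseteq H_{F_0}$ is to verify that $f$ and each $\frac{\partial f}{\partial x_i}$ also annihilate $\psi$.

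The key step is to argue that $f,\frac{\partial f}{\partial x_1},\ldots,\frac{\partial f}{\partial x_n}$ all lie in the left $D$-ideal $\Id(G_{(\gamma,f)})$. Let
$$I=\text{Ann}(f^s)+\Id\bigl(f,\tfrac{\partial f}{\partial x_1},\ldots,\tfrac{\partial f}{\partial x_n}\bigr)+\Id(s-\gamma)\subseteq D[s],$$
so that $G'$ is a minimal (hence reduced) Gr\"obner basis of $I$ with respect to the block order $\{x,\partial\}\gg s$. Because $s-\gamma\in I$ and the leading monomial of $s-\gamma$ is $s$ (the smallest block), no other element of the reduced basis can have any term divisible by $s$; hence every $g\in G_{(\gamma,f)}=G'\setminus\{s-\gamma\}$ in fact lies in $D$. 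This justifies the formula $M_{(\gamma,f)}=D/\Id(G_{(\gamma,f)})$ and is the crux of the argument.

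Now for each $h\in\{f,\frac{\partial f}{\partial x_1},\ldots,\frac{\partial f}{\partial x_n}\}\subset I$ I would write a Gr\"obner representation
$$h=h_0(s-\gamma)+\sum_{g\in G_{(\gamma,f)}}h_g\,g, \qquad h_0,h_g\in D[s].$$
Since $h$ does not involve $s$, specializing $s\mapsto\gamma$ gives $h=\sum_{g\in G_{(\gamma,f)}}h_g|_{s=\gamma}\,g$ in $D$, so $h\in\Id(G_{(\gamma,f)})$. Therefore, for every $\psi\in H_{M_{(\gamma,f)}}$, we have $h\ast\psi=\sum_g h_g|_{s=\gamma}\ast(g\ast\psi)=0$, and combined with the $P_0$ observation we conclude $\psi\in H_{F_0}$.

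The only real obstacle is the structural claim that elements of $G_{(\gamma,f)}$ live in $D$ rather than $D[s]$; once this is extracted from the block-order hypothesis and reducedness of $G'$, the rest is formal manipulation of the Gr\"obner representation and the definition of $H_{M_{(\gamma,f)}}$.
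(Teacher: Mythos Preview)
Your proof is correct and follows the paper's approach; the paper itself compresses everything into the single line ``Since $P_0 \subset G_{(\gamma,f)}$, Lemma~\ref{hodai} holds,'' taking for granted that $f$ and the $\frac{\partial f}{\partial x_i}$ lie in the $D$-ideal $\Id(G_{(\gamma,f)})$, which you spell out explicitly via the Gr\"obner representation and specialization $s\mapsto\gamma$. One minor slip: ``minimal (hence reduced)'' is not correct in general --- minimality only guarantees that the \emph{leading} monomials of elements of $G_{(\gamma,f)}$ are free of $s$ --- but since the paper's setup already stipulates $G_{(\gamma,f)}\subset D$ (this is implicit in writing $M_{(\gamma,f)}=D/\Id(G_{(\gamma,f)})$ and in the action $h\ast\psi$), this does not affect the validity of your argument for the lemma.
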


Since $P_0 \subset G_{(\gamma,f)}$, Lemma~\ref{hodai} holds. Note that $F_0 \subset \C[x]$, thus, a basis of the vector space $H_{F_0}$ can be obtained by the algorithm \cite{NT17,TNN09}. An algorithm for computing a basis of $H_{M_{(\gamma,f)}}$ is the following. 

\noindent
\mbox{}\hrulefill\\
{\bf Algorithm 3}. ( A basis of $H_{M_{(\gamma,f)}}$ )\vspace{-2.0mm} \\
\mbox{}\hrulefill\\
{\bf Input:} $f \in \C[x]$: a polynomial with an isolated singularity at $O$. $\gamma \in \Q$: a root of $\tilde{b}_{f,0}(s)$.  Fix a monomial order $\succ$ on $\C[\xi]$.\\
{\bf Output:} $\Psi$: a basis of the vector space $H_{M(\gamma,f)}$.\\
{\bf BEGIN} \\
$G_{(\gamma,f)} \gets $ Compute $G_{(\gamma,f)}$; \ $\{f_1,\ldots,f_r\}\gets G_{(\gamma,f)}\cap \C[x]$; \\
 $F_0\gets \{f_1,\ldots,f_r\}\cup \{f,\frac{\partial f}{\partial x_1},\ldots,\frac{\partial f}{\partial x_n}\}$; \ $P_1\gets G_{(\gamma,f)}\backslash \{f_1,\ldots,f_r\}$; \\
$G_0\gets$ Compute a basis of the vector space $H_{F_0}$;  \ \ \ \ \verb|/*| $G_0$ \verb|is echelon form. */|\\
$\Psi \gets \emptyset$; \ $L\gets \emptyset$; \\ 
{\bf while} $G_0\neq \emptyset$ {\bf do} \\
 \ \ \ Select $\psi$ whose head monomial is the smallest in $\hht(G_0)$; $G_0\gets G_0 \backslash \{\psi\}$; \\ 
 \ \ \ $\displaystyle \varphi \gets \psi+\sum_{\varsigma_i \in L} c_{i}\varsigma_i $;  \ \ \ \ \ \ \ \verb|/*The symbol| $c_i$ \verb|is an indeterminate.*/|\\
 \ \ \ $E\gets $ Make a system of linear equations with $c_i$ from $\{p \ast \psi =0 | p \in P_1\}$; \\
 \ \ \ {\bf if} $E$ has a solution {\bf then}\\
 \ \ \ \ $\varphi' \gets$ Substitute the solution into $c_i$ of $\varphi$;\\
 \ \ \ \ $\Psi \gets \Psi\cup \{\varphi' \}$; \\ 
 \ \ \ {\bf else} \\
 \ \ \ \  $L \gets L \cup \{\psi \}$; \ \ \ \ \ \ \  \ \ \ \verb|/* candidate of lower cohomology classes.*/|\\
 \ \ \ {\bf end-if} \\
{\bf end-while} \\
{\bf return} $\Psi$;\\ 
{\bf END}\vspace{-3mm} \\
\mbox{}\hrulefill \vspace{-3mm} 
 \ \\

\begin{thm}
Algorithm~3 returns a basis of the vector space $H_{M(\gamma,f)}$ and terminates.
\end{thm}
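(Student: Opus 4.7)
The plan is to split the claim into (i) termination, (ii) membership $\Psi\subseteq H_{M_{(\gamma,f)}}$, and (iii) the assertion that $\Psi$ is in fact a basis (spanning plus linear independence). For the setup, I would first note that since $f$ has an isolated singularity at $O$, the ideal $\Id(f,\partial f/\partial x_1,\ldots,\partial f/\partial x_n)\subseteq F_0$ is zero-dimensional at the origin, so $H_{F_0}$ is a finite-dimensional $\C$-vector space by standard algebraic local cohomology theory. Its basis $G_0$ returned in echelon form (via the reference \cite{NT17,TNN09}) is therefore a finite set. Termination is then immediate: the outer while loop strips one element of $G_0$ per iteration, and each inner step (solving a linear system over $\C$ in finitely many unknowns $c_i$) is clearly finite.

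For membership, I would use Lemma~\ref{hodai}: every $\psi\in G_0$ lies in $H_{F_0}$, hence is annihilated by $\{f_1,\ldots,f_r\}\subseteq F_0$. Since $L\subseteq G_0\subseteq H_{F_0}$ and $H_{F_0}$ is a vector space, any $\varphi'=\psi+\sum_{\varsigma_i\in L}c_i\varsigma_i$ still lies in $H_{F_0}$ and is annihilated by $\{f_1,\ldots,f_r\}$. By construction $\varphi'$ is a solution of the linear system $E$, so $p*\varphi'=0$ for every $p\in P_1$. Combining, $\varphi'$ is annihilated by $\{f_1,\ldots,f_r\}\cup P_1=G_{(\gamma,f)}$, i.e.\ $\varphi'\in H_{M_{(\gamma,f)}}$.

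For the basis claim, introduce the $\C$-linear map
\[
T\colon H_{F_0}\longrightarrow \bigoplus_{p\in P_1}H^n_{[O]}(\C[x]),\qquad T(\psi)=(p*\psi)_{p\in P_1},
\]
so that $H_{M_{(\gamma,f)}}=\ker T$ by Lemma~\ref{hodai}. Order $G_0=\{\psi_1,\ldots,\psi_N\}$ by increasing head monomial, and let $\Psi_k,L_k$ denote the contents of $\Psi,L$ after iteration $k$. The key inductive claim is
\[
\ker T\cap\mathrm{span}_{\C}(\psi_1,\ldots,\psi_k)=\mathrm{span}_{\C}(\Psi_k),\qquad 0\le k\le N.
\]
The inductive step is the main technical point: an element of the left-hand side at stage $k$ has the form $a\psi_k+\eta$ with $\eta\in\mathrm{span}(\psi_1,\ldots,\psi_{k-1})$; if $a\neq 0$, the equation $T(a\psi_k+\eta)=0$ rewrites (after dividing by $a$) as an assertion that $\psi_k+\eta'$ is a kernel element for some $\eta'$ in the complement of the previous kernel, and, decomposing $\eta'$ with respect to $\Psi_{k-1}\cup L_{k-1}$, precisely the $L_{k-1}$-coefficients supply a solution to the linear system $E$ at step $k$; hence iteration $k$ produces a $\varphi'$ that together with $\Psi_{k-1}$ spans the augmented kernel. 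If $a=0$, the induction hypothesis applies directly. The case $k=N$ yields the spanning property. Linear independence is immediate: each $\varphi'$ added to $\Psi$ has $\hht(\varphi')=\hht(\psi)$ because every $\varsigma_i\in L$ was selected earlier and hence has strictly smaller head monomial; distinct leading monomials force linear independence.

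The hard part will be the inductive step above, specifically verifying that failure of the linear system at earlier stages is correctly ``remembered'' by $L$ so that no kernel vector involving $\psi_k$ is missed. Concretely, one must show that the solvability of $E$ at step $k$ with candidate correctors drawn from $L_{k-1}$ is equivalent to the existence of some $\eta\in\mathrm{span}(\psi_1,\ldots,\psi_{k-1})$ with $\psi_k+\eta\in\ker T$; this reduces to the linear-algebra fact that $\mathrm{span}(\Psi_{k-1})\subseteq\ker T$ already accounts for all kernel contributions from $\mathrm{span}(\psi_1,\ldots,\psi_{k-1})$, so any effective correction must come from $\mathrm{span}(L_{k-1})$, exactly the set of correctors the algorithm tries.
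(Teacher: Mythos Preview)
Your proposal is correct and follows the same overall structure as the paper's proof: termination via finite-dimensionality of $H_{F_0}$, membership by construction, linear independence via distinct head monomials. The paper's own proof of spanning is essentially the one-line assertion ``since $H_{M_{(\gamma,f)}}\subset H_{F_0}$, we have $\mathrm{Span}(\Psi)=H_{M_{(\gamma,f)}}$,'' whereas your inductive invariant $\ker T\cap\mathrm{span}(\psi_1,\ldots,\psi_k)=\mathrm{span}(\Psi_k)$ together with the decomposition $\mathrm{span}(\psi_1,\ldots,\psi_{k-1})=\mathrm{span}(\Psi_{k-1})\oplus\mathrm{span}(L_{k-1})$ supplies the rigorous justification that the paper omits.
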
 
\begin{proof}
As $H_{F_0}$ is the finite dimensional vector space, the set $G_0$ is finite. Thus, this algorithm terminates.  
Since each element in the output $ \Psi $ satisfies linear partial differential  equation $ G_{(\gamma, f)}, $ we have $ \Psi \subset H_{M_{(\gamma, f)}}.$
 Since $  H_{M_{(\gamma, f)}} \subset H_{F_0}, $ we have  $\text{Span}(\Psi) = H_{M_{(\gamma, f)}}. $ Furthermore, each element in $ \Psi $ has a form $\displaystyle  \psi+\sum_{\varsigma_i \in L} c_{i}\varsigma_i$, they are linearly independent.  Therefore, the algorithm returns a basis of the vector space $ H_{M_{(\gamma, f)}}. $
\end{proof}

We illustrate Algorithm~3 with the following example

\begin{exmp}
Let us consider $f=x^3+yz^2+y^7+xy^5+xz^2 \in \C[x,y,z]$ that defines an isolated singularity at the origin. By computing $\tilde{b}_{f,0}$, we have rational numbers $\gamma=-\frac{19}{21}$ and $-\frac{4}{3}$ as roots of $\tilde{b}_{f,0}(\gamma)=0$. Let us execute Algorithm~3 to get bases of the vector spaces $H_{M_{(-\frac{19}{21}, f)}}$ and $H_{M_{(-\frac{4}{3},f)}}$.  
Let $\xi, \eta, \zeta$ denote symbols correspond to the variables $x, y, z$, and $\partial_x=\frac{\partial}{\partial x}$, $\partial_y=\frac{\partial}{\partial y}$, $\partial_z=\frac{\partial}{\partial z}$.
 The monomial order $\succ$  is the degree lexicographic with $\xi \succ \eta \succ \zeta$.

\begin{enumerate}
\setlength{\leftskip}{-3mm}
\item[$\bullet$] In case $\gamma=-\frac{19}{21}$, then $G_{(-\frac{19}{21},f)}=\{x,y,z\}$. Thus, it is obvious that $H_{M_{(-\frac{19}{21},f)}}=\text{Span}\left(1\right)$. 
 
\item[$\bullet$] In case $\gamma=-\frac{4}{3}$, then 
$G_{(-\frac{4}{3},f)}=\{z^2,yx,xz,xy,x^2,12x-y^3,24x\partial_y+x+2y^2,81xz\partial_z-1367x+32y^3-1152y^2\partial_y-48y^2-768yz\partial_z-5376y,41472x\partial_x
+61503x+1064y^2+13824y\partial_y+576y+41472z\partial_z+138240,35378x+124416y^2\partial_x+184509y^2-497664y\partial_y^2-41472y\partial_y+75744y-1492992z\partial_y\partial_z-62208z\partial_z-3981312\partial_y-165888\}$.

Let $P_0=G_{(-\frac{4}{3},f)}\cap \C[x,y,z]$ and 
\begin{center}
$F_0=P_0\cup \{f, \frac{\partial f}{\partial x},\frac{\partial f}{\partial y},\frac{\partial f}{\partial z}\}=\{z^2,zy,zx,yx,x^2,12x-y^3,f, \frac{\partial f}{\partial x},\frac{\partial f}{\partial y},\frac{\partial f}{\partial z}\}$  
\end{center}
A basis $G_0$ of the vector space of $H_{F_0}$ is 
$G_0=\{1, \zeta,\eta, \eta^2,\eta^3+\frac{1}{12}\xi \}$. Set \\
$P_1=G_{(-\frac{4}{3},f)}\backslash P_0=\{24x\partial_y+x+2y^2,81xz\partial_z-1367x+32y^3-1152y^2\partial_y-48y^2-768yz\partial_z-5376y,41472x\partial_x+61503x+1064y^2+13824y\partial_y+576y+41472z\partial_z+138240,35378x+124416y^2\partial_x+184509y^2-497664y\partial_y^2-41472y\partial_y+75744y-1492992z\partial_y\partial_z-62208z\partial_z-3981312\partial_y-165888\}$, $\Psi=\emptyset$ and $L=\emptyset$.
\item[(1)] Take $1$ whose head monomial is the smallest in $\hht(G_0)$ w.r.t. $\succ$. Renew $G_0$ as $G_0\backslash \{1\}$ and take $41472x\partial_x+61503x+1064y^2+13824y\partial_y+576y+41472z\partial_z+138240$ from $P_1$. Then, it is clear that $(41472x\partial_x+61503x+1064y^2+13824y\partial_y+576y+41472z\partial_z+138240)\ast (1)\neq 0$. Thus, renew $L$ as $L\cup \{1\}$.

\item[(2)] Take $\zeta$ whose head monomial is the smallest in $\hht(G_0)$  w.r.t. $\succ$. Renew $G_0$ as $G_0\backslash \{\zeta\}$. Set $\varphi=\zeta+c$ where $c$ is an indeterminate. Then $\{p\ast  \varphi|p\in P_1\}=\{0, 0, 41472c, -62208c+1492992c\eta \}.$
Thus, when $c=0$, then $\varphi \in H_{M_{(-\frac{4}{3},f)}}$. Renew $\Psi$ as $\Psi \cup \{\zeta\}$.

\item[(3)] Take $\eta$ whose head monomial is the smallest in $\hht(G_0)$. Renew $G_0$ as $G_0\backslash \{\eta\}$. Set $\varphi=\eta+c$ where $c$ is an indeterminate. Then, 
$\{p\ast \varphi |p\in P_1\}=\{
0,-2304, (41472c+576)+27648\eta,(75744-62208c)+(1492992c-20736)\eta\}.$ Obviously, the second element $-2304$ is not zero. Thus, renew $L$ as $\{\eta,1\}$.

\item[(4)] Take $\eta^2$ whose head monomial is the smallest in $\hht(G_0)$. Renew $G_0$ as $G_0\backslash \{\eta^2\}$. Set $\varphi=\eta^2+c_1\eta+c_2$ where $c_1,c_2$ are indeterminates. Then, $(24x\partial_y+x+2y^2)\ast \varphi=2 \neq 0$ where $24x\partial_y+x+2y^2 \in P_1$. Hence, $\varphi \notin H_{M_{(-\frac{4}{3},f)}}$ and renew $L$ as $\{\eta^2, \eta,1\}$.
 
\item[(5)] Take $\eta^3+\frac{1}{12}\xi$ from $G_0$. Renew $G_0$ as $G_0\backslash \{\eta^3+\frac{1}{12}\xi \}$ where $c_1,c_2, c_3$ are indeterminates. Set $\varphi =\eta^3+\frac{1}{12}\xi+c_1\eta^2+c_2\eta+c_3$. Then, \\
$\{p\ast \varphi |p\in P_1\}=\{2c_1+\frac{1}{12}, 48c_1+2304c_2+\frac{266}{3}+(1152c_1+48)\eta,1064c_1+576c_2+41472c_3+\frac{20501}{4}+(576c_1+27648c_2+1064)\eta+(13824c_1+576)\eta^2,184$ $509c_1+75744c_2-62208c_3+\frac{17689}{6}+(75744c_1-20736c_2+1492992c_3+184509)\eta-(124416c_1+5184)\eta^2+(20736c_1+1990656c_2+75744)\eta^2+(1492992c_1+62208)\eta^3\}$ \\
Solve the following system of linear equations that are from $\{p\ast \varphi |p\in P_1\}.$\\
$2c_1+\frac{1}{12}=0, 48c_1+2304c_2+\frac{266}{3}=0,1152c_1+48=0,1064c_1+576c_2+41472c_3+\frac{20501}{4}=0,576c_1+27648c_2+1064=0,13824c_1+576=0,184509c_1+75744c_2-62208c_3+\frac{17689}{6}=0,75744c_1-20736c_2+1492992c_3+184509=0,124416c_1+5184=0,20736c_1+1990656c_2+75744=0,1492992c_1+62208=0.$\\
Then, $c_1=-\frac{1}{24}, c_2=-\frac{65}{1728}, c_3=0$. Renew $\Psi$ as $\{\eta, \eta^3+\frac{1}{12}\xi-\frac{1}{24}\eta^2-\frac{65}{1728}\eta\}$.

\item[(6)] Since $G_0 = \emptyset$, Algorithm~3 stops. Therefore, $\Psi$ is a basis of the vector space $H_{M_{(-\frac{4}{3},f)}}$.
\end{enumerate}
\end{exmp}

By using a framework presented in \cite{NT17}, we have extended Algorithm~3 to handle parametric cases. The resulting algorithm that compute the  parametric local cohomology solution space of parametric holonomic $ D$-module $ M_{(\gamma, f)} $ is implemented in the computer algebra system {\sf Risa/Asir}.

%%%%%%%%%%%%%%%%%%%%%%%%%%%%%%%%%%%%%%%%%%%%%%%%%%
\section{New algorithm}\label{s5}
Here we introduce a new algorithm for computing reduced $b$-functions $\tilde{b}_{f,0}$ of a semi-weighted homogeneous polynomial. As we described in subsection~\ref{sb53}, the computational complexity of computing a Gr\"obner basis of $\text{Ann}(f^s)+\Id(f,\frac{\partial f}{\partial x_1}, \ldots, \frac{\partial f}{\partial x_n})$ is quite high. In order to overcome the difficulty, we adopt the idea introduced by  Levandovskyy and Martin-Morales \cite{LM12} and address to the computation of $b$-functions associated with $ \mu$-constant deformations.

Let $f=f_0+g \in \C[x]$ where $ f_0 $ is the weighted homogeneous part,  $ d \in {\mathbb N}$, ${\bf w}=(w_1,\ldots, w_n) \in \N^n$  and  $g$ is a linear combination of upper monomials.

%%%%%%%%%%%%%%%%%%%%%%%%%%%%%%%%%%%%%%%%%%%%%%%%%%
\subsection{Properties of semi-weighted homogeneous singularities}
First we review some properties of semi-weighted homogeneous singularities that  are needed for constructing a new algorithm. 

\begin{defn}
Let $ f_0$ be a weighted homogeneous polynomial of type $(d; {\bf w})$ with an isolated singularity at the origin $ O. $ The Poincar\'e polynomial of $ f_0 $ is the univariate polynomial defined to be 
$$P_{(d;{\bf w})}(t)=\frac{t^{d-w_1}-1}{t^{w_1}-1}\cdot \frac{t^{d-w_2}-1}{t^{w_2}-1}\cdots \frac{t^{d-w_n}-1}{t^{w_n}-1}.$$
\end{defn} 

It is well-known that all roots of $\tilde{b}_{f_0,0}$ can be computed by the Poincar\'e polynomial.

\begin{thm}\label{root}
Let $f_0$ be a weighted homogeneous polynomial of type $(d;{\bf w})$ with an isolated singularity at the origin $O$, and $ w_0=\sum_{i=1}^nw_i$. Let $ P_{(d;{\bf w})}(t)=\sum_{i=1}^rc_it^{\alpha_i}$ ($c_i\neq 0$) be the Poincar\'e polynomial of type $(d; {\bf w})$. Then, the set of roots of $\tilde{b}_{f_0,0}(s)=0$ is equal to $\SetDef{-\frac{\alpha_i+w_0}{d}}{ 1\le i \le r}$.
\end{thm}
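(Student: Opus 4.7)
The plan is to follow the classical Malgrange--Kashiwara--Yano derivation of the $b$-function of a quasi-homogeneous isolated hypersurface singularity in terms of the Hilbert series of its Milnor algebra. First I would record the Euler identity: setting $E=\sum_{i=1}^n w_ix_i\partial_i$, the hypothesis that $f_0$ is weighted homogeneous of type $(d;{\bf w})$ gives $E\cdot f_0 = d\, f_0$ and therefore $E\cdot f_0^s = sd\, f_0^s$; together with the brackets $[E,x_i]=w_ix_i$ and $[E,\partial_i]=-w_i\partial_i$, this endows $D[s]\cdot f_0^s$ and its subquotients with a rational weighted grading.

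Next I would compute the Hilbert series of the Milnor algebra $M(f_0):=\C\{x\}/\Id(\partial_1 f_0,\ldots,\partial_n f_0)$. Since $f_0$ has an isolated singularity at $O$, the sequence $\partial_1 f_0,\ldots,\partial_n f_0$ is regular in $\C\{x\}$, and each $\partial_i f_0$ is weighted homogeneous of degree $d-w_i$. The Koszul complex is therefore a graded free resolution of $M(f_0)$, and taking Euler characteristics yields
$$\sum_\alpha \dim_\C M(f_0)_\alpha \cdot t^\alpha \;=\; \prod_{i=1}^n \frac{1-t^{d-w_i}}{1-t^{w_i}} \;=\; P_{(d;{\bf w})}(t).$$
Hence the weighted degrees (counted with multiplicities) of any monomial basis of $M(f_0)$ are exactly the exponents $\alpha_1,\ldots,\alpha_r$ appearing in $P_{(d;{\bf w})}(t)$.

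The core step is then to identify these weighted degrees with the roots of $\tilde{b}_{f_0,0}(s)$. For this I would invoke the Brieskorn-lattice description of the reduced $b$-function for an isolated quasi-homogeneous singularity: $\tilde{b}_{f_0,0}(s)$ is the characteristic polynomial of the shifted parameter $s$ on a graded quotient of the $V$-filtration that, in the weighted homogeneous case, is canonically isomorphic to $M(f_0)$ tensored with the volume form $dx_1\wedge\cdots\wedge dx_n$. The volume form carries weighted degree $w_0$, and the Euler relation together with $t=f_0$ forces $s$ to act as multiplication by $-(E+w_0)/d$ on this quotient; therefore a class of weighted degree $\alpha_i$ contributes the eigenvalue $-(\alpha_i+w_0)/d$, and the asserted root set follows.

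The main obstacle is the last identification --- verifying that $s$ acts as $-(E+w_0)/d$ on the graded piece of the $V$-filtration and correctly tracking the $w_0$-shift coming from the volume form. The Koszul computation of the Hilbert series and the Euler identity itself are routine. I would cite \cite{yano78,Kashi} for the microlocal identification rather than reproduce that standard argument in full.
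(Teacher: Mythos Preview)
The paper does not actually prove this theorem: it is stated immediately after the sentence ``It is well-known that all roots of $\tilde{b}_{f_0,0}$ can be computed by the Poincar\'e polynomial'' and is used as a black box in Algorithm~4, with no argument given in the text. So there is no ``paper's own proof'' to compare your proposal against.

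Your sketch is a correct outline of the classical result. The Koszul computation of the Hilbert series of the Milnor algebra is standard, and the identification of the roots of $\tilde{b}_{f_0,0}$ with the numbers $-(\alpha_i+w_0)/d$ via the action of the Euler operator on the Brieskorn lattice (equivalently, on $\Omega^n/df_0\wedge\Omega^{n-1}$) is exactly the argument in Yano \cite{yano78}. You have correctly flagged the one non-routine step: justifying that $s$ acts as $-(E+w_0)/d$ on the relevant graded piece, which in the quasi-homogeneous case is the computation that the residue of the Gauss--Manin connection on a monomial basis of $M(f_0)$ has eigenvalues $(\alpha_i+w_0)/d$. Since you plan to cite \cite{yano78,Kashi} for this, the proposal is adequate for a result that the paper itself treats as background.
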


The following properties are from \cite{kashi75,yano78}.

\begin{thm}\label{dim}
Let $\{\gamma_1,\ldots,\gamma_r\}$ be the set of roots of $\tilde{b}_{f,0}(s)=0$ and let $\mu$ be the Milnor number of the singularity $f=0$ at the origin $O$. Then,  $\displaystyle  \mu=\sum_{i=1}^r\dim_{\C}(H_{M_{(\gamma_i,f)}}).$

\end{thm}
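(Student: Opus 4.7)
The plan is to introduce the auxiliary $D[s]$-module
\[
\mathcal{N} := D[s]/\bigl(\Ann(f^s) + \Id(f, \tfrac{\partial f}{\partial x_1}, \ldots, \tfrac{\partial f}{\partial x_n})\bigr),
\]
so that $M_{(\gamma, f)} = \mathcal{N}/(s-\gamma)\mathcal{N}$, and to work with its localization $\mathcal{N}_O$ at $O$. By the very definition of the reduced local $b$-function, $\tilde{b}_{f,0}(s)$ annihilates $\mathcal{N}_O$, so $\mathcal{N}_O$ is a finite-dimensional $\C$-vector space on which $s$ acts with characteristic values $\gamma_1, \ldots, \gamma_r$. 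Hence it decomposes as $\mathcal{N}_O = \bigoplus_{i=1}^r \mathcal{N}_O^{(\gamma_i)}$ into generalized $(s-\gamma_i)$-eigenspaces, and the theorem will follow once I verify the two equalities $\dim_{\C}\mathcal{N}_O = \mu$ and $\dim_{\C}\mathcal{N}_O^{(\gamma_i)} = \dim_{\C}H_{M_{(\gamma_i, f)}}$.

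For the total dimension, I would filter $D[s] f^s$ by the $\partial_t$-order (equivalently by $s$-degree) and identify the associated graded with a quotient of $\C\{x\}[s] \otimes f^s$ by the ideal generated by $f$ and the Jacobian $J(f) = (\partial f / \partial x_1, \ldots, \partial f / \partial x_n)$. Combined with the $\tilde{b}_{f,0}$-annihilation (which bounds the $s$-degree), this reduces the count to $\dim_{\C} \C\{x\} / J(f) = \mu$. For the eigenspace dimensions, observe that by Theorem~\ref{yano} and Lemma~\ref{lem1} each $M_{(\gamma_i, f)}$ is a nonzero holonomic $D$-module supported at $\{O\}$; Kashiwara's equivalence for holonomic $D$-modules with point support then identifies $M_{(\gamma_i, f)}$ with a finite direct sum of copies of the algebraic local cohomology module $H^n_{[O]}(\C[x])$, whose multiplicity coincides with $\dim_{\C} H_{M_{(\gamma_i, f)}}$. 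A rank computation at the fibre $s = \gamma_i$ then matches this multiplicity with $\dim_{\C}\mathcal{N}_O^{(\gamma_i)}$, yielding $\mu = \dim_{\C}\mathcal{N}_O = \sum_i \dim_{\C} \mathcal{N}_O^{(\gamma_i)} = \sum_i \dim_{\C} H_{M_{(\gamma_i, f)}}$.

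The main obstacle is the identification step matching the $\C[s]$-generalized-eigenspace of $\mathcal{N}_O$ with the $D$-module solution space $H_{M_{(\gamma_i, f)}}$: this requires handling the $\C[s]$- and $D$-module structures on $\mathcal{N}_O$ simultaneously, and invoking Kashiwara's structure theorem for holonomic modules with point support together with duality between such modules and their algebraic local cohomology solutions. In the semi-weighted homogeneous case Theorem~\ref{single} tells us $\tilde{b}_{f,0}(s)$ is square-free, so each $\mathcal{N}_O^{(\gamma_i)}$ is a genuine eigenspace and the matching is cleaner; in general one would argue via the Jordan decomposition of $s$ on $\mathcal{N}_O$.
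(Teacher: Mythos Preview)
The paper does not actually prove Theorem~\ref{dim}; it is quoted as a known result from \cite{kashi75,yano78}. Your sketch follows the classical line of argument, but it contains a genuine error: the module $\mathcal{N}_O$ is \emph{not} a finite-dimensional $\C$-vector space. Indeed, for any root $\gamma_i$ the quotient $M_{(\gamma_i,f)}=\mathcal{N}/(s-\gamma_i)\mathcal{N}$ is a nonzero holonomic $D$-module supported at $O$, hence by Kashiwara's structure theorem isomorphic to a nonzero finite direct sum of copies of $H^n_{[O]}(\C[x])$, which is already infinite-dimensional over $\C$. Since $\mathcal{N}_O$ surjects onto $M_{(\gamma_i,f)}$, it is infinite-dimensional as well. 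Consequently your equalities $\dim_\C\mathcal{N}_O=\mu$ and $\dim_\C\mathcal{N}_O^{(\gamma_i)}=\dim_\C H_{M_{(\gamma_i,f)}}$ cannot hold as written, and the filtration argument you give for the first one is computing a quantity that does not exist.

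The fix is to replace $\C$-dimension by $D$-module length (equivalently, the multiplicity in the Kashiwara decomposition), or---what amounts to the same thing---to dualize and work instead with the genuinely finite-dimensional solution space $V=\operatorname{Hom}_D(\mathcal{N}_O,H^n_{[O]})$, on which $s$ acts $\C$-linearly. Then $H_{M_{(\gamma_i,f)}}=\operatorname{Hom}_D(\mathcal{N}_O/(s-\gamma_i)\mathcal{N}_O,H^n_{[O]})$ identifies with $\ker(s-\gamma_i\mid V)$, and the theorem becomes $\mu=\dim_\C V$ together with diagonalizability of $s$ on $V$. For the first, the classical route (Malgrange, Yano) goes through the Brieskorn lattice or local duality with the Milnor algebra rather than the $\partial_t$-filtration you sketch. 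For the second, note that $\dim\ker(s-\gamma_i)$ is only the \emph{geometric} multiplicity, so $\sum_i\dim H_{M_{(\gamma_i,f)}}\le\dim V$ with equality precisely when $s$ is diagonalizable; you rightly observe that Theorem~\ref{single} guarantees this in the semi-weighted homogeneous setting, but without that hypothesis the identity in Theorem~\ref{dim} need not hold as stated.
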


%%%%%%%%%%%%%%%%%%%%%%%%%%%%%%%%%%%%%%%%%%%%%%%%%%
\subsection{New algorithm}

The new algorithm mainly consists of the  following three steps,
\begin{enumerate}
\setlength{\leftskip}{5mm}
\item[Step~1:] to compute candidates $\gamma$s of the roots of $\tilde{b}_{f,0}(s)=0$,
\item[Step~2:] to check whether $\tilde{b}_{f,0}(\gamma)=0$ or $\tilde{b}_{f,0}(\gamma) \neq 0$,
\item[Step~3:] to check whether $\Gamma=\{\gamma_1,\ldots,\gamma_r\}$ is equal to $\{\gamma | \tilde{b}_{f,0}(\gamma)=0\}$ or not, where $\gamma_1,\ldots,\gamma_r$ are roots of $\tilde{b}_{f,0}(s)=0$.
\end{enumerate}

The following lemma quoted from \cite{Kashi}, tells us how to compute the candidates. 
\begin{lem}
Let $E_{0}=\{\gamma \in \mathbb{Q} | \tilde{b}_{f_0,0}(\gamma)=0\}$ and $E=\{\gamma \in \mathbb{Q} | \tilde{b}_{f,0}(\gamma)=0\}$.
Then, $E$ is a subset of $E'=\{\gamma+k | \gamma \in E_{0}, k\in \mathbb{Z}, -n<\gamma +k <0\}$ 
 where $\mathbb{Z}$ is the set of integers. 
\end{lem}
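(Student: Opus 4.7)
The plan is to combine two classical facts about reduced local $b$-functions of isolated singularities. First, a range bound: every root $\gamma' \in E$ lies in the open interval $(-n, 0)$. Second, a monodromy-invariance statement: the set of roots of $\tilde{b}_{f,0}(s)$ reduced modulo $\mathbb{Z}$ is contained in the corresponding set for $\tilde{b}_{f_0,0}(s)$, because the Milnor monodromy at the origin is invariant under $\mu$-constant deformation.

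First I would invoke the Kashiwara--Malgrange theorem, which identifies the set $\{\exp(-2\pi i \gamma') \mid \gamma' \in E\}$ with (a subset of) the eigenvalues of the Milnor monodromy of $f$ at $O$ acting on reduced vanishing cohomology, and likewise for $f_0$ and $E_0$. Next, because $f = f_0 + g$ is a semi-weighted homogeneous, hence $\mu$-constant, deformation of $f_0$, I would argue that the Milnor monodromies of $f$ and $f_0$ are conjugate. In the semi-weighted setting this is direct: the one-parameter family $f_\tau = f_0 + \tau g$ remains semi-weighted homogeneous of type $(d; {\bf w})$ with an isolated singularity at $O$ for every $\tau$, so the Milnor number is constant along $\tau \in [0,1]$, and the classical invariance of the Milnor fibration in a $\mu$-constant family (accessible via the weighted $\mathbb{C}^*$-action in this homogeneous situation, bypassing the dimension-$3$ subtlety in L\^e--Ramanujam) yields conjugacy of the monodromies. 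Consequently, every $\gamma' \in E$ satisfies $\exp(-2\pi i \gamma') = \exp(-2\pi i \gamma)$ for some $\gamma \in E_0$, i.e.\ $\gamma' - \gamma \in \mathbb{Z}$.

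Finally, I would combine this congruence with the range bound $-n < \gamma' < 0$: writing $\gamma' = \gamma + k$ with $k \in \mathbb{Z}$ and $-n < \gamma + k < 0$ shows $\gamma' \in E'$, giving $E \subseteq E'$ as required. The main obstacle I anticipate is the clean justification of the two ingredients rather than their combination: the range bound for reduced $b$-functions of isolated singularities is standard but uses nontrivial input (the theory of the $V$-filtration along $f$, or Saito's bounds on the spectrum), and the monodromy-conjugacy along a $\mu$-constant family, while well known, requires invoking either L\^e--Ramanujam or, more cleanly in the present setting, the explicit weighted $\mathbb{C}^*$-trivialization that contracts $f$ to $f_0$ through semi-weighted homogeneous representatives. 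Once these two ingredients are cited, the deduction of the inclusion is immediate.
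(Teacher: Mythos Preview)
The paper does not supply its own proof of this lemma; it merely quotes the statement and attributes it to Kashiwara \cite{Kashi}. Your outline therefore goes beyond what the paper does. The argument you sketch is sound: the inclusion $E \subseteq E_0 + \mathbb{Z}$ follows from Malgrange's theorem identifying $\{\exp(-2\pi i\gamma) : \gamma \in E\}$ with the monodromy eigenvalues on the reduced Milnor cohomology, combined with the conjugacy of the Milnor monodromies of $f$ and $f_0$ along the $\mu$-constant family $f_\tau = f_0 + \tau g$ (and, as you observe, the weighted $\mathbb{C}^*$-action makes this conjugacy elementary in the semi-weighted homogeneous situation, sidestepping the dimension restriction in L\^e--Ramanujam). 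The range constraint $E \subset (-n,0)$ then fixes the admissible integer shifts.

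It is worth noting that the bare citation to \cite{Kashi} is somewhat loose: Kashiwara's 1976 paper proves rationality and negativity of the roots of $b$-functions, but neither the congruence with $E_0$ modulo $\mathbb{Z}$ nor the lower bound $-n$ appears there explicitly; those ingredients come respectively from Malgrange's monodromy theorem and from Saito--Varchenko type bounds on the spectrum, exactly as you flag. In short, your proposal is correct and more complete than the paper's one-line appeal to the literature.
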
 

Since $ E_0 $ is determined by Theorem~\ref{root}, it is easy to obtain $ E^{\prime}. $ Empirically, it is sufficient to check $k=0,1,2$. Hence, in Step~1, we use $$E'=\{\gamma+k | \gamma \in E_{0}, k\in \{0,1,2\}, -n<\gamma +k <0\}$$ as s set of candidates of the roots.
  
Next, in Step~2, we have to check whether $\gamma \in E'$ is a root of $\tilde{b}_{f,0}(s)=0$ or not. We borrow the idea of Levandovskyy and Martin-Morales \cite{LM12}.

\begin{lem}\label{check}
Let $H$ be a basis of $\text{Ann}(f^{s})$ in $D[s]$ and $\gamma \in \Q$. Let $G$ be a minimal Gr\"obner basis of $\Id(H\cup \{f, \frac{\partial f}{\partial x_1},\frac{\partial f}{\partial x_2},\ldots,$ $\frac{\partial f}{\partial x_n} \} \cup \{s-\gamma\})$ w.r.t. a block order with $x\cup \partial \gg s$. Then, if $s-\gamma \in G$, $s-\gamma$ is a factor of the global $b$-function of $f$.
\end{lem}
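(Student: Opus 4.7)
The plan is to prove the contrapositive via the elimination property of block orders. Set
\[
J := \Ann(f^s) + \Id\Bigl(f,\tfrac{\partial f}{\partial x_1},\ldots,\tfrac{\partial f}{\partial x_n}\Bigr) \subseteq D[s],
\]
so that $J \cap \C[s] = \Id(\tilde{b}_f(s))$ by the characterization of the reduced $b$-function recalled in Subsection~3.1, and write $K := J + \Id(s-\gamma)$. Since the block order satisfies $x\cup\partial \gg s$, a first step is to invoke the standard elimination property, which gives that $G \cap \C[s]$ is a Gr\"obner basis of $K \cap \C[s]$ in the commutative ring $\C[s]$.

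Next I would argue that the hypothesis $s-\gamma \in G$ forces $K$ to be a proper ideal. Indeed, if $K$ contained a nonzero constant, then $K = D[s]$ and any minimal Gr\"obner basis of $K$ must contain an element whose leading monomial is $1$; but $1$ divides the leading monomial $s$ of $s - \gamma$, so by minimality $s - \gamma$ could not lie in $G$, contradicting the hypothesis. Hence $1 \notin K$, i.e., $K \cap \C[s] \subsetneq \C[s]$.

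The concluding step exploits the centrality of $s$ in the ring $D[s]$. Since $s$ commutes with every element of $D$, for any $p(s) \in \C[s]$ the identity $p(s) - p(\gamma) = (s-\gamma)\,q(s)$ holds in $\C[s] \subseteq D[s]$ exactly as in the commutative case, so $p(s) \equiv p(\gamma) \pmod{\Id(s-\gamma)}$. Applying this to $\tilde{b}_f(s) \in J \subseteq K$ yields $\tilde{b}_f(\gamma) \in K \cap \C[s]$; combined with the previous paragraph this forces the constant $\tilde{b}_f(\gamma)$ to be zero, so $(s-\gamma)$ divides $\tilde{b}_f(s)$ and hence also divides the global $b$-function $b_f(s) = (s+1)\tilde{b}_f(s)$.

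The point requiring the most care, which I expect to be the only real obstacle, is this substitution step in the non-commutative setting. It works precisely because $s-\gamma$ is central in $D[s]$, so reduction modulo the two-sided ideal $\Id(s-\gamma)$ coincides with ordinary polynomial evaluation at $s = \gamma$ when applied to elements of $\C[s]$. Once this centrality is made explicit, the congruence $p(s)\equiv p(\gamma) \pmod{s-\gamma}$ and hence the rest of the argument are routine.
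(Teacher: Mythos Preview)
The paper does not actually prove this lemma; it merely states the result and attributes the idea to Levandovskyy and Mart\'in-Morales \cite{LM12}, so there is no in-paper argument to compare against. Your proof is correct and self-contained: the chain $\tilde{b}_f(s)\in J\subseteq K$, the centrality of $s$ in $D[s]$ giving $\tilde{b}_f(s)\equiv\tilde{b}_f(\gamma)\pmod{s-\gamma}$, and the observation that $s-\gamma\in G$ with $G$ minimal forces $1\notin K$ together yield $\tilde{b}_f(\gamma)=0$ exactly as you say. One small cleanup: the elimination property you invoke in your first paragraph is never used afterward, since properness of $K$ is deduced directly from minimality of $G$ rather than from any description of $G\cap\C[s]$; the argument stands without it.
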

\noindent 
{\bf Remark:} The computational speed of computing a minimal Gr\"obner basis of $\Id(H\cup \{f, \frac{\partial f}{\partial x_1},\ldots,$ $\frac{\partial f}{\partial x_n} \} \cup \{s-\gamma\})$ is much faster than that of $\Id(H\cup \{f, \frac{\partial f}{\partial x_1},\ldots,\frac{\partial f}{\partial x_n} \})$, because the degree of $s-\gamma$ is 1. Moreover, for a candidate $\gamma \in E'$, it is sufficient to check the single root because of Theorem~\ref{single}. However, as we are considering ``local'' $b$-functions at the origin $O$, we need to check the support of $\gamma$ (i.e., $\text{supp}(M_{(\gamma,f)})$) if $s-\gamma \in G$.\\

As we know how to compute a CGS in $D[s]$, we can naturally  extend the idea to parametric cases. We have implemented the parametric version of Lemma~\ref{check} in {\sf Risa/Asir}.\\

In our implementation, the command \verb|para_ann1| (or \verb|para_ann|) returns a CGS of $\text{Ann}(f^s)$,  and the command \verb|root_check| (or \verb|root_check11|) returns a CGS of $\Id(\text{Ann}(f^s)\cup \{f, \frac{\partial f}{\partial x_1},\frac{\partial f}{\partial x_2},\ldots,\frac{\partial f}{\partial x_n} \} \cup \{s-\gamma\})$ where $\gamma \in \Q$. In {\em Example}~4, \verb|ANN| is a CGS of $\text{Ann}(f^s)$. The form \verb|[[S1],[S2]]| means a stratum $\V($\verb|S1|$)\backslash \V($\verb|S2|$)$. 

\begin{exmp} Let us consider $S_{16}$ singularity. The $\mu$-constant deformation  is given by $f=x^2z+yz^2+xy^4+u_1y^6+u_2z^3$ where $u_1,u_2$ are parameters. Let us check whether $17s+19$ is a factor of $\tilde{b}_f(s)$ or not. 
{\small 
\begin{verbatim}
[2727] F=x^2*z+y*z^2+x*y^4+u1*y^6+u2*z^3$
[2728] ANN=para_ann1(F,[u1,u2],[x,y,z])$
[2729] roots_check(ANN,F,17*s+19,[u1,u2],[x,y,z]);
[[0],[486*u2*u1^13-1143*u2^2*u1^9-639*u2^3*u1^5-68*u2^4*u1]]
[17*s+19,z,x,y^2,(4030*u1^4+4913*dy*u1-9826*u2)*y+11560*u2^2*z+9826*u1]
 
[[27*u1^4+4*u2],[u1,u2]]
[17*s+19,z,x,-y^2,(140711*u1^3+9826*dy)*y+19652]
 
[[u1,u2],[1]]
[1]
 
[[6*u1^4-17*u2],[u1,u2]]
[17*s+19,z,x,y^2,(-562*u1^3-4913*dy)*y-9826]
 
[[3*u1^4+u2],[u1,u2]]
[17*s+19,z,x,y^2,(33508*u1^3+4913*dy)*y+9826]
 
[[u1],[u1,u2]]
[17*s+19,z,y,x]
 
[[u2],[u1,u2]]
[17*s+19,z,x,y^2,(4030*u1^3+4913*dy)*y+9826]
\end{verbatim}
}

The monomial order $\succ$ used in the computation above is a block order $\{\partial_x, \partial_y, \partial_z\} \gg \{x,y,z\} \gg s$ which is specified on  $\text{Mono}(\{\partial_x, \partial_y, \partial_z\})$ as the total degree lexicographic monomial order with $\partial_x \succ \partial_y \succ \partial_z$, and on $\Mono(\{x,y,z\})$ as the total degree lexicographic monomial order with $z \succ  y \succ  x$.

Let $I$ denote the ideal generated by $\text{Ann}(f^s)$ and $\{f, \frac{\partial f}{\partial x},\frac{\partial f}{\partial y},\frac{\partial f}{\partial z}, 17s+19\}$ in the ring $(D[u_1,u_2])[s]$, i.e., $=\Id(\text{Ann}(f^s) \cup \{f, \frac{\partial f}{\partial x},\frac{\partial f}{\partial y},\frac{\partial f}{\partial z}, 17s+19\})$.

The meanings of the output above are the following.
\begin{enumerate}
\item If $(u_1,u_2)$ belongs to $\U_1=\left(\C^2 \backslash \V(486u_2u_1^{13}-1143u_2^2u_1^9-639u_2^3u_1^5-68u_2^4u_1)\right)=\left(\C^2 \backslash \V((27u_1^4+\right.$ $\left.4u_2)(6u_1^4-17u_2)(3u_1^4+u_2)u_1u_2)\right)$, then 
$$G_1=\{17s+19,z,x,y^2,(4030u_1^4+4913\partial_y u_1-9826u_2)y+11560u_2^2z+9826u_1\}$$
is a minimal Gr\"obner basis of $I$ w.r.t. $\succ$. As $17s+19 \in G_1$ and $G_1\cap \mathbb{C}[x,y,z]=\{z,x,y^2\}$, $17s+19$ is a factor of $\tilde{b}_f(s)$ and $\text{supp}(M_{(-\frac{19}{17},f)})=\{O\}$, namely, $17s+19$ is a factor of $\tilde{b}_{f,0}(s)$. 

\item If $(u_1,u_2)$ belongs to $\U_2=\V(27u_1^4+4u_2) \backslash \V(u_1,u_2)$, then 
$G_2=\{17s+19,z,x,-y^2,(140711$ $u_1^3+9826\partial_y)y+19652\}$ 
is a minimal Gr\"obner basis of $I$ w.r.t. $\succ$. As $17s+19 \in G_2$ and $G_2\cap \C[x,y,z]=\{z,x,-y^2\}$, $17s+19$ is a factor of $\tilde{b}_{f,0}(s)$. 

\item If $(u_1,u_2)$ belongs to $\U_3=\V(u_1,u_2)$, then $G_3=\{1\}$
is a minimal Gr\"obner basis of $I$ w.r.t. $\succ$. As $17s+19 \notin G_3$,  $17s+19$ is not a factor of $\tilde{b}_f(s)$. 

\item If $(u_1,u_2)$ belongs to $\U_4=\V(6u_1^4-17u_2) \backslash \V(u_1,u_2)$, then 
$G_4=\{17s+19,z,x,y^2,(-562u_1^3-4913\partial_y)y-9826\}$ 
is a minimal Gr\"obner basis of $I$ w.r.t. $\succ$. As $17s+19 \in G_4$ and $G_4\cap \C[x,y,z]=\{z,x,y^2\}$, $17s+19$ is a factor of $\tilde{b}_{f,0}(s)$. 

\item If $(u_1,u_2)$ belongs to $\U_5=\V(3u_1^4+u_2) \backslash \V(u_1,u_2)$, then 
$G_5=\{17s+19,z,x,y^2,(33508u_1^3+4913\partial_y)y+9826\}$ 
is a minimal Gr\"obner basis of $I$ w.r.t. $\succ$. As $17s+19 \in G_5$ and $G_5\cap \C[x,y,z]=\{z,x,y^2\}$, $17s+19$ is a factor of $\tilde{b}_{f,0}(s)$. 

\item If $(u_1,u_2)$ belongs to $\U_6=\V(u_1) \backslash \V(u_1,u_2)$, then 
$G_6=\{17s+19,z,y,x\}$ 
is a minimal Gr\"obner basis of $I$ w.r.t. $\succ$. As $17s+19 \in G_6$ and $G_6\cap \C[x,y,z]=\{z,y,x\}$, $17s+19$ is a factor of $\tilde{b}_{f,0}(s)$. 

\item If $(u_1,u_2)$ belongs to $\U_7=\V(u_2) \backslash \V(u_1,u_2)$, then 
$G_7=\{17s+19,z,x,y^2,(4030u_1^3+4913\partial_y)y+9826\}$ 
is a minimal Gr\"obner basis of $I$ w.r.t. $\succ$. As $17s+19 \in G_7$ and $G_7\cap \C[x,y,z]=\{z,x,y^2\}$, $17s+19$ is a factor of $\tilde{b}_{f,0}(s)$. 

\end{enumerate}

Note that  $17s+19 \in G_i \ (i=1,2,4,5,6,7)$ and $17s+19 \notin G_3$.

Since 
$$\U_1\cup \U_2 \cup \U_4 \cup \U_5 \cup \U_6 \cup \U_7=\C^2 \backslash \U_3, $$ we have \\
i) if $(u_1,u_2)\neq (0,0)$, then $17s+19$ is a factor of $\tilde{b}_f(s)$ and $\text{supp}(M_{(-\frac{19}{17},f)})=\{O\}$, namely, $17s+19$ is a factor of (local) $\tilde{b}_{f,0}(s)$, \\
ii) if $u_1=u_2=0$, then $17s+19$ is not a factor of (global) $\tilde{b}_f(s)$.
\end{exmp}

In Step 3, we apply Theorem~\ref{dim} and we use Algorithm~3 for computing $ {\rm dim}_{{\mathbb C}}(H_{M_{(\gamma, f)}}). $

\noindent
\mbox{}\hrulefill\hrulefill\\
{\bf Algorithm 4}. (Local $b$-function at $O$)\vspace{-2.0mm} \\
\mbox{}\hrulefill\hrulefill\\
{\bf Input:} $f=f_0+g \in \C[x]$: a semi-weighted homogeneous polynomial with an isolated singularity at $O$. \\
$H$: a basis of $\text{Ann}(f^s)$.  \ 
$\mu(f_0) \in \N$: the Milnor number of $f_0$ at the origin $O$.\\
$\succ$ : a monomial order satisfying $\{x,\partial\} \gg s$. \\
{\bf Output:} $b$: the reduced $b$-function $\tilde{b}_{f,0}(s)$.\\
{\bf BEGIN} \\
$b\gets 1$; \ $\mu\gets 0$; \ $k\gets 0$; \ $J \gets \{f,\frac{\partial f}{\partial x_1},\ldots ,\frac{\partial f}{\partial x_n}\}$; \\
$E_0 \gets$ Compute all roots of $\tilde{b}_{f_0,0}(s)$ by the Poincar\'e polynomial; \\
{\bf while} $\mu \neq \mu(f_0)$ {\bf do}\\
 \ \ \ $E' \gets\{\gamma+k | \gamma \in E_{0}, -n<\gamma +k <0\}$; \\
\ \ \ {\bf while} $E'\neq \emptyset$ {\bf do} \\
 \ \ \ \ \ \ Select $\gamma$ from $E'$; $E'\gets E' \backslash \{\gamma\}$; \\
 \ \ \ \ \ \ $G \gets $ Compute a minimal Gr\"obner basis of $\Id(H\cup J \cup \{s-\gamma\})$ w.r.t. $\succ$;\\
 \ \ \ \ \ \ {\bf if} $\left( s-\gamma \in G\right)$ and $\left(O \in \text{supp}(M_{(\gamma,f)})\right)$ {\bf then} \\
 \ \ \ \ \ \ \ \ \ $b\gets b\cdot (s-\gamma)$; \\
 \ \ \ \ \ \ \ \ \ $\mu \gets \mu+\dim_{\C}(H_{M_{(\gamma,f)}})$;\\
 \ \ \ \ \ \ {\bf end-if}\\
\ \ \ {\bf end-while}\\
\ \ \ $k\gets k+1$;\\
{\bf end-while}\\
{\bf return} $b$;\\
{\bf END} \vspace{-3mm} \\
\noindent 
\mbox{}\hrulefill\hrulefill \vspace{-3mm} \\
\noindent 

As we described in section~3 and 4, a CGS of $\text{Ann}(f^s)$ and a basis of the vector space $H_{M_{(\gamma,f)}}$ with parameters, are computable. Thus, Algorithm~4 can be generalized to parametric cases, too. We have computed all $b$-functions of Table~\ref{modality2} by utilizing the generalized algorithm. We have obtained all $b$-functions of Table~\ref{modality2}, successfully.  The $b$-functions of Table~\ref{modality2} are given in Section~6.\\
 \ \\
{\bf Remark:} We tried to compute local $b$-functions associated with $\mu$-constant deformations of inner modality 3 singularities by using a computer PC1 (see section 3). We could obtain none of CGSs of $\text{Ann}(f^s)$ within three months and thus we could not use the method described in section 5. 
We expect however that local $b$-functions associated with $\mu$-constant deformations can be computed by using the proposed method provided that parametric bases of $\text{Ann}(f^s)$ is given. \\

In this paper, we introduce a new algorithm for computing parametric $\tilde{b}_{f,0}$  of a semi-weighted homogeneous polynomial by improving  Algorithm~2. The resulting algorithm has better performance than Algorithm~2. 
The resulting algorithm and Algorithm~2 use the same CGS algorithm  described in \cite{NOT16,NOT18}. The difference lies in the way of its using. The versatility of CGS algorithm allows the specialization and the improvement.

%%%%%%%%%%%%%%%%%%%%%%%%%%%%%%%%%%%%%%%%%%%%%%%%%%%%%%%%%%%%%%%%%%%%%%%%%%%%%%%
\section{List of $b$-functions associated with $\mu$-constant deformations }

Here all $b$-functions $\tilde{b}_{f,0}(s)$ of $\mu$-constant deformation of inner modality 2 singularities, are presented. 

Currently, our {\sf Risa/Asir} implementation is in the following webpage
\begin{center}
\url{https://www-math.ias.tokushima-u.ac.jp/~nabesima/bfunction/bfunc2.html}. 
\end{center}
Two computation times ``CGS of $\text{Ann}(f^s)$'' and ``parametric version of Algorithm 4'' are also presented in each $\mu$-constant deformation. The time is given in CPU seconds. The computer [OS: Windows 10, CPU: intel core i9-7900X, 3.30 GHz, Memory: 128 GB] was used. 

\begin{enumerate}
\setlength{\leftskip}{0.3cm}
\item[$\bullet$  $E_{18}$:] $f=x^3+y^{10}+u_1xy^7+u_2xy^8$ 
\begin{flushleft}
{\renewcommand\arraystretch{1.2}
\begin{tabular}{|c|c|}
\hline 
stratum & $\tilde{b}_{f,0}(s)$ \\ \hline
$\C^2\backslash \V(u_1)$ &
 $b_{\text{st}}(s)(s+\frac{14}{30})(s+\frac{17}{30})$ \\ \hline
$\V(u_1)\backslash \V(u_1,u_2)$ & 
 $b_{\text{st}}(s)(s+\frac{17}{30})(s+\frac{44}{30})$  \\ \hline
$\V(u_1,u_2)$ & $b_{\text{st}}(s)(s+\frac{44}{30})(s+\frac{47}{30})$  \\ \hline
\end{tabular}
}
\end{flushleft}
\begin{flushleft}
\setlength{\leftskip}{-0.9cm}
$b_{\text{st}}(s)=(s+\frac{13}{30})(s+\frac{16}{30})(s+\frac{19}{30})(s+\frac{22}{30})(s+\frac{23}{30})(s+\frac{25}{30})(s+\frac{26}{30})$ \\
\hspace{0cm}$\times (s+\frac{28}{30})(s+\frac{29}{30})(s+\frac{31}{30})(s+\frac{32}{30})(s+\frac{34}{30})(s+\frac{35}{30})(s+\frac{37}{30})(s+\frac{38}{30})(s+\frac{41}{30}).$
\end{flushleft}
\noindent 
\begin{tabular}{cccc}
CGS of $\text{Ann}(f^s)$: 3141 &  \ \ \ & parametric version of Algorithm~4: 5.156 &  \ \ \  \\
\end{tabular}

\item[$\bullet$  $E_{19}$:]$f=x^3+xy^{7}+u_1y^{11}+u_2y^{12}$ 
\begin{flushleft}
{\renewcommand\arraystretch{1.2} 
\begin{tabular}{|c|c|}
\hline 
$\C^2\backslash \V(u_1)$ & $b_{\text{st}}(s)(s+\frac{10}{21})(s+\frac{12}{21})$ \\ \hline
$\V(u_1)\backslash \V(u_1,u_2)$ & $b_{\text{st}}(s)(s+\frac{12}{21})(s+\frac{31}{21})$ \\ \hline
$\V(u_1,u_2)$ &$b_{\text{st}}(s)(s+\frac{31}{21})(s+\frac{33}{21})$ \\ \hline
\end{tabular} 
} 
\end{flushleft}
\begin{flushleft}
\setlength{\leftskip}{-0.9cm}
$b_{\text{st}}(s)=(s+\frac{9}{21})(s+\frac{11}{21})(s+\frac{13}{21})(s+\frac{15}{21})(s+\frac{16}{21})(s+\frac{17}{21})(s+\frac{18}{26})(s+\frac{19}{21})$ \\
\hspace{0.0cm}$\times (s+\frac{20}{21})(s+\frac{21}{21})(s+\frac{22}{21})(s+\frac{23}{21})(s+\frac{24}{21})(s+\frac{25}{21})(s+\frac{26}{21})(s+\frac{27}{21})(s+\frac{29}{21}).$
\end{flushleft}
\noindent 
\begin{tabular}{cccc}
CGS of $\text{Ann}(f^s)$: 10410 &  \ \ \ & parametric version of Algorithm~4: 45.5 &  \ \ \  \\
\end{tabular}

\item[$\bullet$  $E_{20}$:] $f=x^3+y^{11}+u_1xy^8+u_2xy^9$ 
\begin{flushleft}
{\renewcommand\arraystretch{1.2}
\begin{tabular}{|c|c|}
\hline 
$\C^2\backslash \V(u_1)$ & $b_{\text{st}}(s)(s+\frac{16}{33})(s+\frac{19}{33})$  \\ \hline
$\V(u_1)\backslash \V(u_1,u_2)$ &  $b_{\text{st}}(s)(s+\frac{19}{33})(s+\frac{49}{33})$ \\ \hline
$\V(u_1,u_2)$ & $b_{\text{st}}(s)(s+\frac{49}{33})(s+\frac{52}{33})$ \\\hline
\end{tabular} 
}
\end{flushleft}
\begin{flushleft}
\setlength{\leftskip}{-0.9cm}
$b_{\text{st}}(s)=(s+\frac{14}{33})(s+\frac{17}{33})(s+\frac{20}{33})(s+\frac{23}{33})(s+\frac{25}{33})(s+\frac{26}{33})(s+\frac{28}{33})(s+\frac{29}{33})(s+\frac{31}{33})$ \\
\hspace{0.1cm}$\times (s+\frac{32}{33})(s+\frac{34}{33})(s+\frac{35}{33})(s+\frac{37}{33})(s+\frac{38}{33})(s+\frac{40}{33})(s+\frac{41}{33})(s+\frac{43}{33})(s+\frac{46}{33}).$
\end{flushleft}
\noindent 
\begin{tabular}{cccc}
CGS of $\text{Ann}(f^s)$: 2667 &  \ \ \ & parametric version of Algorithm~4: 164 &  \ \ \  \\
\end{tabular}

\item[$\bullet$ $W_{17}$:] $f=x^4+xy^{5}+u_1y^{7}+u_2y^{8}$
\begin{flushleft}
{\renewcommand\arraystretch{1.2}
\begin{tabular}{|c|c|}
\hline
$\C^2\backslash \V(u_1)$ & $b_{\text{st}}(s)(s+\frac{9}{20})(s+\frac{12}{20})$ \\\hline
$\V(u_1)\backslash \V(u_1,u_2)$ &  $b_{\text{st}}(s)(s+\frac{12}{20})(s+\frac{29}{20})$ \\ \hline
$\V(u_1,u_2)$ &  $b_{\text{st}}(s)(s+\frac{29}{20})(s+\frac{32}{20})$ \\ \hline
\end{tabular} 
}
\end{flushleft}
\begin{flushleft}
\setlength{\leftskip}{-0.9cm}
$b_{\text{st}}(s)=(s+\frac{8}{20})(s+\frac{11}{20})(s+\frac{13}{20})(s+\frac{14}{20})(s+\frac{16}{20})(s+\frac{17}{20})(s+\frac{18}{20})(s+\frac{19}{20})$ \\
\hspace{0.0cm}$\times (s+\frac{20}{20})(s+\frac{21}{20})(s+\frac{22}{20})(s+\frac{23}{20})(s+\frac{24}{20})(s+\frac{26}{20})(s+\frac{27}{20}).$
\end{flushleft}
\noindent 
\begin{tabular}{cccc}
CGS of $\text{Ann}(f^s)$: 354.1 &  \ \ \ & parametric version of Algorithm~4: 7.547 &  \ \ \  \\
\end{tabular}

\item[$\bullet$ $W_{18}$:]$f=x^4+y^7+u_1x^2y^4+u_2x^2y^5$
\begin{flushleft}
{\renewcommand\arraystretch{1.2}
\begin{tabular}{|c|c|}
\hline 
$\C^2\backslash \V(u_1)$ & $b_{\text{st}}(s)(s+\frac{13}{28})(s+\frac{17}{28})$  \\\hline
$\V(u_1)\backslash \V(u_1,u_2)$ &  $b_{\text{st}}(s)(s+\frac{17}{28})(s+\frac{41}{28})$  \\ \hline
$\V(u_1,u_2)$ & $b_{\text{st}}(s)(s+\frac{41}{28})(s+\frac{45}{28})$ \\ \hline\end{tabular} 
}
\end{flushleft}
\begin{flushleft}
\setlength{\leftskip}{-0.9cm}
$b_{\text{st}}(s)=(s+\frac{11}{28})(s+\frac{15}{28})(s+\frac{18}{28})(s+\frac{19}{28})(s+\frac{22}{28})(s+\frac{23}{28})(s+\frac{25}{28})(s+\frac{26}{28})$ \\
\hspace{0.0cm}$\times (s+\frac{27}{28})(s+\frac{29}{28})(s+\frac{30}{28})(s+\frac{31}{28})(s+\frac{33}{28})(s+\frac{34}{28})(s+\frac{37}{28})(s+\frac{38}{28}).$\end{flushleft}
\noindent 
\begin{tabular}{cccc}
CGS of $\text{Ann}(f^s)$: 149.8 &  \ \ \ & parametric version of Algorithm~4: 2.438 &  \ \ \  \\
\end{tabular}

\item[$\bullet$  $Z_{17}$:]  $f=x^3y+y^8+u_1xy^6+u_2xy^7$
\begin{flushleft}
{\renewcommand\arraystretch{1.2}
\begin{tabular}{|c|c|}
\hline 
$\C^2\backslash \V(u_1)$ & $b_{\text{st}}(s)(s+\frac{11}{24})(s+\frac{14}{24})$ \\\hline
$\V(u_1)\backslash \V(u_1,u_2)$ &  $b_{\text{st}}(s)(s+\frac{14}{24})(s+\frac{35}{24})$ \\\hline
$\V(u_1,u_2)$ & $b_{\text{st}}(s)(s+\frac{35}{24})(s+\frac{38}{24})$ \\\hline
 \end{tabular} 
}
\end{flushleft}
\begin{flushleft}
\setlength{\leftskip}{-0.9cm}
$b_{\text{st}}(s)=(s+\frac{10}{24})(s+\frac{13}{24})(s+\frac{16}{24})(s+\frac{17}{24})(s+\frac{19}{24})(s+\frac{20}{24})(s+\frac{22}{24})(s+\frac{23}{24})$ \\
\hspace{0cm}$\times (s+\frac{24}{24})(s+\frac{25}{24})(s+\frac{26}{24})(s+\frac{28}{24})(s+\frac{29}{24})(s+\frac{31}{24})(s+\frac{32}{24}).$
\end{flushleft}
\noindent 
\begin{tabular}{cccc}
CGS of $\text{Ann}(f^s)$: 903.8 &  \ \ \ & parametric version of Algorithm~4: 15.25 &  \ \ \  \\
\end{tabular}

\item[$\bullet$  $Z_{18}$ :] $f=x^3y+xy^6+u_1y^9+u_2y^{10}$
\begin{flushleft}
{\renewcommand\arraystretch{1.2}
\begin{tabular}{|c|c|}
\hline
$\C^2\backslash \V(u_1)$ & $b_{\text{st}}(s)(s+\frac{8}{17})(s+\frac{10}{17})$ \\ \hline
$\V(u_1)\backslash \V(u_1,u_2)$ &  $b_{\text{st}}(s)(s+\frac{10}{17})(s+\frac{25}{17})$ \\ \hline
$\V(u_1,u_2)$ & $b_{\text{st}}(s)(s+\frac{25}{17})(s+\frac{27}{17})$ \\ \hline\end{tabular} 
}
\end{flushleft}
\begin{flushleft}
\setlength{\leftskip}{-0.9cm}
$b_{\text{st}}(s)=(s+\frac{7}{17})(s+\frac{9}{17})(s+\frac{11}{17})(s+\frac{12}{17})(s+\frac{13}{17})(s+\frac{14}{17})(s+\frac{15}{17})(s+\frac{16}{17})$ \\
\hspace{0cm}$\times (s+\frac{17}{17})(s+\frac{18}{17})(s+\frac{19}{17})(s+\frac{20}{17})(s+\frac{21}{17})(s+\frac{22}{17})(s+\frac{23}{17}).$
\end{flushleft}
\noindent 
\begin{tabular}{cccc}
CGS of $\text{Ann}(f^s)$: 546.1 &  \ \ \ & parametric version of Algorithm~4: 5.344 &  \ \ \  \\
\end{tabular}

\item[$\bullet$ $Z_{19}$: ]  $f=x^3y+y^9+u_1xy^7+u_2xy^8$
\begin{flushleft}
{\renewcommand\arraystretch{1.2}
\begin{tabular}{|c|c|}
\hline
$\C^2\backslash \V(u_1)$ & $b_{\text{st}}(s)(s+\frac{13}{27})(s+\frac{16}{27})$ \\ \hline
$\V(u_1)\backslash \V(u_1,u_2)$ &  $b_{\text{st}}(s)(s+\frac{16}{27})(s+\frac{40}{27})$ \\\hline
$\V(u_1,u_2)$ & $b_{\text{st}}(s)(s+\frac{40}{27})(s+\frac{43}{27})$ \\\hline
\end{tabular} 
}
\end{flushleft}
\begin{flushleft}
\setlength{\leftskip}{-0.9cm}
$b_{\text{st}}(s)=(s+\frac{11}{27})(s+\frac{14}{27})(s+\frac{17}{27})(s+\frac{19}{27})(s+\frac{20}{27})(s+\frac{22}{27})(s+\frac{23}{27})(s+\frac{25}{27})$ \\
\hspace{0cm}$\times (s+\frac{26}{27})(s+\frac{27}{27})(s+\frac{28}{27})(s+\frac{29}{27})(s+\frac{31}{27})(s+\frac{32}{27})(s+\frac{34}{27})(s+\frac{35}{27})(s+\frac{37}{27}).$
\end{flushleft}
\noindent 
\begin{tabular}{cccc}
CGS of $\text{Ann}(f^s)$: 984.2 &  \ \ \ & parametric version of Algorithm~4: 5.141 &  \ \ \  \\
\end{tabular}

\item[$\bullet$  $Q_{16}$:] $f=x^3+yz^2+y^7+u_1xy^5+u_2xz^2$
\begin{flushleft}
{\renewcommand\arraystretch{1.2}
\begin{tabular}{|c|c|}
\hline
$\C^2\backslash \V(u_1)$ & $b_{\text{st}}(s)(s+\frac{20}{21})(s+\frac{23}{21})$ \\\hline
$\V(u_1)\backslash \V(u_1,u_2)$ &  $b_{\text{st}}(s)(s+\frac{23}{21})(s+\frac{41}{21})$ \\\hline
$\V(u_1,u_2)$ & $b_{\text{st}}(s)(s+\frac{41}{21})(s+\frac{44}{21})$  \\\hline
 \end{tabular} 
}
\end{flushleft}
\begin{flushleft}
\setlength{\leftskip}{-0.9cm}
$b_{\text{st}}(s)=(s+\frac{19}{21})(s+\frac{22}{21})(s+\frac{25}{21})(s+\frac{26}{21})(s+\frac{28}{21})(s+\frac{29}{21})(s+\frac{31}{21})(s+\frac{32}{21})$ \\
\hspace{0cm}$\times (s+\frac{34}{21})(s+\frac{35}{21})(s+\frac{37}{21})(s+\frac{38}{21}).$
\end{flushleft}
\noindent 
\begin{tabular}{cccc}
CGS of $\text{Ann}(f^s)$: 50730 &  \ \ \ & parametric version of Algorithm~4: 3.953 &  \ \ \  \\
\end{tabular}

\item[$\bullet$  $Q_{17}$:] $f=x^3+yz^2+xy^5+u_1y^8+u_2y^9$
\begin{flushleft}
{\renewcommand\arraystretch{1.2}
\begin{tabular}{|c|c|}
\hline 
$\C^2\backslash \V(u_1)$ & $b_{\text{st}}(s)(s+\frac{29}{30})(s+\frac{33}{30})$ \\\hline
$\V(u_1)\backslash \V(u_1,u_2)$ &  $b_{\text{st}}(s)(s+\frac{33}{30})(s+\frac{59}{30})$  \\\hline
$\V(u_1,u_2)$ & $b_{\text{st}}(s)(s+\frac{59}{30})(s+\frac{63}{30})$ \\\hline
 \end{tabular} 
}
\end{flushleft}
\begin{flushleft}
\setlength{\leftskip}{-0.9cm}
$b_{\text{st}}(s)=(s+\frac{27}{30})(s+\frac{31}{30})(s+\frac{35}{30})(s+\frac{37}{30})(s+\frac{39}{30})(s+\frac{40}{30})(s+\frac{41}{30})(s+\frac{43}{30})$ \\
\hspace{0cm}$\times (s+\frac{45}{30})(s+\frac{47}{30})(s+\frac{49}{30})(s+\frac{50}{30})(s+\frac{51}{30})(s+\frac{53}{30})(s+\frac{55}{30}).$
\end{flushleft}
\noindent 
\begin{tabular}{cccc}
CGS of $\text{Ann}(f^s)$: 1073 &  \ \ \ & parametric version of Algorithm~4: 2.813 &  \ \ \  \\
\end{tabular}

\item[$\bullet$ $Q_{18}$ :] $f=x^3+yz^{2}+y^8+u_1xy^{6}+u_2xz^{2}$
\begin{flushleft}
{\renewcommand\arraystretch{1.2}
\begin{tabular}{|c|c|}
\hline
$\C^2\backslash \V(u_1)$ & $b_{\text{st}}(s)(s+\frac{47}{48})(s+\frac{53}{48})$  \\\hline
$\V(u_1)\backslash \V(u_1,u_2)$ & $b_{\text{st}}(s)(s+\frac{53}{48})(s+\frac{95}{48})$ \\\hline
$\V(u_1,u_2)$ & $b_{\text{st}}(s)(s+\frac{95}{48})(s+\frac{101}{48})$ \\\hline\end{tabular} 
}
\end{flushleft}
\begin{flushleft}
\setlength{\leftskip}{-0.9cm}
$b_{\text{st}}(s)=(s+\frac{43}{48})(s+\frac{49}{48})(s+\frac{55}{48})(s+\frac{59}{48})(s+\frac{61}{48})(s+\frac{64}{48})(s+\frac{65}{48})(s+\frac{67}{48})$ \\
\hspace{0cm}$\times (s+\frac{71}{48})(s+\frac{73}{48})(s+\frac{77}{48})(s+\frac{79}{48})(s+\frac{80}{48})(s+\frac{83}{48})(s+\frac{85}{48})(s+\frac{89}{48}).$
\end{flushleft}
\noindent 
\begin{tabular}{cccc}
CGS of $\text{Ann}(f^s)$: 1516 &  \ \ \ & parametric version of Algorithm~4: 6.844 &  \ \ \  \\
\end{tabular}

\item[$\bullet$  $S_{16}$ :] $f=x^2z+yz^2+xy^4+u_1y^6+u_2z^3$
\begin{flushleft}
{\renewcommand\arraystretch{1.2}
\begin{tabular}{|c|c|}
\hline
 $\C^2\backslash \V(u_1)$ & $b_{\text{st}}(s)(s+\frac{16}{17})(s+\frac{19}{17})$ \\\hline
$\V(u_1)\backslash \V(u_1,u_2)$ &  $b_{\text{st}}(s)(s+\frac{19}{17})(s+\frac{33}{17})$ \\\hline
$\V(u_1,u_2)$ &  $b_{\text{st}}(s)(s+\frac{33}{17})(s+\frac{36}{17})$ \\\hline \end{tabular} 
}
\end{flushleft}
\begin{flushleft}
\setlength{\leftskip}{-0.9cm}
$b_{\text{st}}(s)=(s+\frac{15}{17})(s+\frac{18}{17})(s+\frac{20}{17})(s+\frac{21}{17})(s+\frac{22}{17})(s+\frac{23}{17})(s+\frac{24}{17})$ \\
\hspace{0cm}$\times (s+\frac{25}{17})(s+\frac{26}{17})(s+\frac{27}{17})(s+\frac{28}{17})(s+\frac{29}{17})(s+\frac{30}{17})(s+\frac{31}{17}).$
\end{flushleft}
\noindent 
\begin{tabular}{cccc}
CGS of $\text{Ann}(f^s)$: 7107 &  \ \ \ & parametric version of Algorithm~4: 18.98 &  \ \ \  \\
\end{tabular}

\item[$\bullet$  $S_{17}$ :] $f=x^2z+yz^2+y^6+u_1y^4z+u_2z^3$
\begin{flushleft}
{\renewcommand\arraystretch{1.2}
\begin{tabular}{|c|c|}
\hline 
 $\C^2\backslash \V(u_1)$ & $b_{\text{st}}(s)(s+\frac{23}{24})(s+\frac{27}{24})$ \\\hline
$\V(u_1)\backslash \V(u_1,u_2)$ &  $b_{\text{st}}(s)(s+\frac{27}{24})(s+\frac{47}{24})$ \\\hline
$\V(u_1,u_2)$ &  $b_{\text{st}}(s)(s+\frac{47}{24})(s+\frac{51}{24})$ \\\hline
 \end{tabular} 
}
\end{flushleft}
\begin{flushleft}
\setlength{\leftskip}{-0.9cm}
$b_{\text{st}}(s)=(s+\frac{21}{24})(s+\frac{25}{24})(s+\frac{28}{24})(s+\frac{29}{24})(s+\frac{31}{24})(s+\frac{32}{24})(s+\frac{33}{24})(s+\frac{35}{24})$ \\
\hspace{0cm}$\times (s+\frac{36}{24})(s+\frac{37}{24})(s+\frac{39}{24})(s+\frac{40}{24})(s+\frac{41}{24})(s+\frac{43}{24})(s+\frac{44}{24}).$
\end{flushleft}
\noindent 
\begin{tabular}{cccc}
CGS of $\text{Ann}(f^s)$: 26220 &  \ \ \ & parametric version of Algorithm~4: 3.063 &  \ \ \  \\
\end{tabular}

\item[$\bullet$  $U_{16}$:] $x^3+xz^2+y^5+u_1y^2z^2+u_2y^3z^2$
\begin{flushleft}
{\renewcommand\arraystretch{1.2}
\begin{tabular}{|c|c|}
\hline
$\C^2\backslash \V(u_1)$ & $b_{\text{st}}(s)(s+\frac{14}{15})(s+\frac{17}{15})$ \\\hline
$\V(u_1)\backslash \V(u_1,u_2)$ &  $b_{\text{st}}(s)(s+\frac{17}{15})(s+\frac{29}{15})$  \\\hline
$\V(u_1,u_2)$ & $b_{\text{st}}(s)(s+\frac{29}{15})(s+\frac{32}{15})$ \\\hline
 \end{tabular} 
}
\end{flushleft}
\begin{flushleft}
\setlength{\leftskip}{-0.9cm}
$b_{\text{st}}(s)=(s+\frac{13}{15})(s+\frac{16}{15})(s+\frac{18}{15})(s+\frac{19}{15})(s+\frac{21}{15})(s+\frac{22}{15})(s+\frac{23}{15})(s+\frac{24}{15})(s+\frac{26}{15})(s+\frac{27}{15})$ 
\end{flushleft}
\noindent 
\begin{tabular}{cccc}
CGS of $\text{Ann}(f^s)$: 3141 &  \ \ \ & parametric version of Algorithm~4: 5.156 &  \ \ \  \\
\end{tabular}

\item[$\bullet$ $J_{16}$:] $f=x^3+y^{9}+u_1x^2y^{3}+u_2y^{10}$ \ \ ($4u_1^3+27 \neq 0$) 
\begin{flushleft}
{\renewcommand\arraystretch{1.2}
\begin{tabular}{|c|c||c|}
\hline
stratum & $\tilde{b}_{f,0}(s)$ &  note \\ \hline
$\C^2\backslash \V((4u_1^3+27)u_1u_2)$ & $b_{\text{st}}(s)$ & $\dim_{\C}(H_{M_{(-5/9,f)}})=2$   \\\hline
$\V(u_1u_2)\backslash \V(4u_1^3+27)$ &  $b_{\text{st}}(s)(s+\frac{14}{9})$ & $\dim_{\C}(H_{M_{(-5/9,f)}})=1,$ \\
 & &   $\dim_{\C}(H_{M_{(-14/9,f)}})=1$ \\ \hline
\end{tabular} 
}
\end{flushleft}
\begin{flushleft}
\setlength{\leftskip}{-0.9cm}
$b_{\text{st}}(s)=(s+\frac{4}{9})(s+\frac{5}{9})(s+\frac{6}{9})(s+\frac{7}{9})(s+\frac{8}{9})(s+\frac{9}{9})(s+\frac{10}{9})(s+\frac{11}{9})(s+\frac{12}{9})(s+\frac{13}{9}).$
\end{flushleft}
\noindent 
\begin{tabular}{cccc}
CGS of $\text{Ann}(f^s)$: 55.83 &  \ \ \ & parametric version of Algorithm~4: 304.6 &  \ \ \  \\
\end{tabular}

\item[$\bullet$ $W_{15}$ :] $f=x^4+y^{6}+u_1x^2y^{3}+u_2y^{7}$ \ \ ($u_1^2-4 \neq 0$) 
\begin{flushleft}
{\renewcommand\arraystretch{1.2}
\begin{tabular}{|c|c||c|}
\hline 
$\C^2\backslash \V((u_1^2-4)u_1u_2)$ & $b_{\text{st}}(s)$ & $\dim_{\C}(H_{M_{(-7/12,f)}})=2$  \\\hline
$\V(u_1u_2)\backslash \V(u_1^2-4)$ &  $b_{\text{st}}(s)(s+\frac{19}{12})$ & $\dim_{\C}(H_{M_{(-7/12,f)}})=1,$  \\
 & & $\dim_{\C}(H_{M_{(-19/12,f)}})=1$ \\ \hline
\end{tabular} 
}
\end{flushleft}
\begin{flushleft}
\setlength{\leftskip}{-0.9cm}
$b_{\text{st}}(s)=(s+\frac{5}{12})(s+\frac{7}{12})(s+\frac{8}{12})(s+\frac{9}{12})(s+\frac{10}{12})(s+\frac{11}{12})(s+\frac{12}{12})(s+\frac{13}{12})$ \\
\hspace{0cm}$\times (s+\frac{14}{12})(s+\frac{15}{12})(s+\frac{16}{12})(s+\frac{17}{12}).$
\end{flushleft}
\noindent 
\begin{tabular}{cccc}
CGS of $\text{Ann}(f^s)$: 16.88 &  \ \ \ & parametric version of Algorithm~4: 158 &  \ \ \  \\
\end{tabular}

\item[$\bullet$ $Z_{15}$ :] $f=x^3y+y^{7}+u_1x^2y^{3}+u_2y^{8}$ \ \ ($4u_1^3+27 \neq 0$)
\begin{flushleft}
{\renewcommand\arraystretch{1.2}
\begin{tabular}{|c|c||c|}
\hline 
$\C^2\backslash \V((4u_1^3+27)u_1u_2)$ & $b_{\text{st}}(s)$ & $\dim_{\C}(H_{M_{(-4/7,f)}})=2$ \\\hline
$\V(u_1u_2)\backslash \V(4u_1^3+27)$ &  $b_{\text{st}}(s)(s+\frac{11}{7})$ & $\dim_{\C}(H_{M_{(-4/7,f)}})=1$ \\
 & &  $\dim_{\C}(H_{M_{(-11/7,f)}})=1$ \\\hline
\end{tabular} 
}
\end{flushleft}
\begin{flushleft}
\setlength{\leftskip}{-0.9cm}
$b_{\text{st}}(s)=(s+\frac{3}{7})(s+\frac{4}{7})(s+\frac{5}{7})(s+\frac{6}{7})(s+\frac{7}{7})(s+\frac{8}{7})(s+\frac{9}{7})(s+\frac{10}{7}).$
\end{flushleft}
\noindent 
\begin{tabular}{cccc}
CGS of $\text{Ann}(f^s)$: 36.86 &  \ \ \ & parametric version of Algorithm~4: 767.2 &  \ \ \  \\
\end{tabular}

\item[$\bullet$ $Q_{14}$:] $f=x^3+yz^{2}+u_1x^2y^{2}+xy^4+u_2y^{7}$ \ \ ($u_1^2-4 \neq 0$)
\begin{flushleft}
{\renewcommand\arraystretch{1.2}
\begin{tabular}{|c|c||c|}
\hline
$\C^2\backslash \V((u_1^2-4)(4u_1^2-15)u_2)$ & $b_{\text{st}}(s)$ \ \ \ & $\dim_{\C}(H_{M_{(-13/12,f)}})=2$   \\\hline
$\V(u_2(4u_1^2-15))\backslash \V(u_1^2-4)$ &  $b_{\text{st}}(s)(s+\frac{25}{12})$ & $\dim_{\C}(H_{M_{(-13/12,f)}})=1,$  \\
 & & $\dim_{\C}(H_{M_{(-25/12,f)}})=1$ \\ \hline
\end{tabular} 
}
\end{flushleft}
\begin{flushleft}
\setlength{\leftskip}{-0.9cm}
$b_{\text{st}}(s)=(s+\frac{11}{12})(s+\frac{13}{12})(s+\frac{15}{12})(s+\frac{16}{12})(s+\frac{17}{12})(s+\frac{19}{12})(s+\frac{20}{12})(s+\frac{21}{12})(s+\frac{23}{12}).$
\end{flushleft}
\noindent 
\begin{tabular}{cccc}
CGS of $\text{Ann}(f^s)$: 630.8 &  \ \ \ & parametric version of Algorithm~4: 341.6 &  \ \ \  \\
\end{tabular}

\item[$\bullet$ $S_{14}$ :] $f=x^2z+yz^{2}+y^5+u_1y^{3}z+u_2z^{3}$ \ \ ($u_1^2-4 \neq 0$)
\begin{flushleft}
{\renewcommand\arraystretch{1.2}
\begin{tabular}{|c|c||c|}
\hline
 $\C^2\backslash \V((u_1^2-4)(3u_1^2-10)u_2)$ & $b_{\text{st}}(s)$ \ \ \ & $\dim_{\C}(H_{M_{(-11/10,f)}})=2$  \\\hline
 $\V(u_2(3u_1^2-10))\backslash \V(u_1^2-4)$ &  $b_{\text{st}}(s)(s+\frac{21}{10})$ & $\dim_{\C}(H_{M_{(-11/10,f)}})=1$ \\
 & & $\dim_{\C}(H_{M_{(-21/10,f)}})=1$ \\ \hline
\end{tabular} 
}
\end{flushleft}
\begin{flushleft}
\setlength{\leftskip}{-0.9cm}
$b_{\text{st}}(s)=(s+\frac{9}{10})(s+\frac{11}{10})(s+\frac{12}{10})(s+\frac{13}{10})(s+\frac{14}{10})(s+\frac{15}{10})(s+\frac{16}{10})(s+\frac{17}{10})(s+\frac{18}{10})(s+\frac{19}{10}).$
\end{flushleft}
\noindent 
\begin{tabular}{cccc}
CGS of $\text{Ann}(f^s)$: 295.1 &  \ \ \ & parametric version of Algorithm~4: 7.438 &  \ \ \  \\
\end{tabular}

\item[$\bullet$ $U_{14}$ : ] $f=x^3+xz^{2}+u_1xy^{3}+y^3z+u_2yz^{3}$ \ \ ($u_1^2+1 \neq 0$) 
\begin{flushleft}
{\renewcommand\arraystretch{1.2}
\begin{tabular}{|c|c||c|}
\hline
 $\C^2\backslash \V((u_1^2+1)(2u_1^2+3)u_2)$ & $b_{\text{st}}(s)$ & $\dim_{\C}(H_{M_{(-10/9,f)}})=2$ \\\hline
$\V(u_2(2u_1^2+3))\backslash \V(u_1^2+1)$ &  $b_{\text{st}}(s)(s+\frac{19}{9})$ & $\dim_{\C}(H_{M_{(-10/9,f)}})=1,$   \\
 & & $\dim_{\C}(H_{M_{(-19/9,f)}})=1$ \\ \hline
\end{tabular} 
}
\end{flushleft}
\begin{flushleft}
\setlength{\leftskip}{-0.9cm}
$b_{\text{st}}(s)=(s+\frac{8}{9})(s+\frac{10}{9})(s+\frac{11}{9})(s+\frac{12}{9})(s+\frac{13}{9})(s+\frac{14}{9})(s+\frac{15}{9})(s+\frac{16}{9})(s+\frac{17}{9}).$
\end{flushleft}
\noindent 
\begin{tabular}{cccc}
CGS of $\text{Ann}(f^s)$: 21230 &  \ \ \ & parametric version of Algorithm~4: 304.3 &  \ \ \  \\
\end{tabular}

\end{enumerate}

%%%%%%%%%%%%%%%%%%%%%%%%%%%%%%%%%%%%%%%%%%%%%%%%%%%%%%%%%%%%%%%%

\end{document}